\newcommand{\Cliques}{\mathsf{Cliques}}
\newcommand{\MaxCliques}{\mathsf{MaxCliques}}
\newcommand{\components}{\mathsf{CC}}
\newcommand{\ProbDist}{\mathsf{Dist}}
\newcommand{\Hom}{\mathsf{Hom}}
\renewcommand{\hom}{\mathsf{hom}}
\newcommand{\hde}{\mathsf{HDE}}
\newcommand{\MRF}{\mathsf{MRF}}
\newcommand{\Supp}{\mathsf{Supp}}
\newcommand{\Vee}{\mathit{Vee}}
\newcommand{\Tri}{\vec{C}_3}
\renewcommand{\ge}{\geqslant}
\renewcommand{\le}{\leqslant}
\renewcommand{\geq}{\ge}
\renewcommand{\leq}{\le}
\newcommand{\ds}{\displaystyle}
\newcommand{\ts}{\textstyle}
\newcommand{\ul}{\underline}
\newcommand{\mr}{\mathrm}
\newcommand{\mc}{\mathcal}
\newcommand{\mds}{\mathds}
\newcommand{\uhr}{\upharpoonright}
\newcommand{\sq}[1]{\ensuremath{\langle#1\rangle}}
\newcommand{\wt}{\widetilde}
\newcommand{\N}{\mds{N}}
\newcommand{\BAR}[1]{\overline{#1}}
\newcommand{\longra}{\longrightarrow}
\renewcommand{\H}{\Ent}
\newcommand{\R}{\mds{R}}
\newcommand{\lra}{\longrightarrow}
\newcommand{\Ent}{\mathbb{H}}
    \newtheorem{thm}{Theorem}[section]
    \newtheorem{next-thm}[thm]{Theorem*}
    \newtheorem{next-prop}[thm]{Proposition*}
    \newtheorem{la}[thm]{Lemma}
    \newtheorem{cor}[thm]{Corollary}
    \newtheorem{claim}[thm]{Claim}
\theoremstyle{definition}
    \newtheorem{df}[thm]{Definition}
    \newtheorem{ex}[thm]{Example}
\begin{document}

\title{The Homomorphism Domination Exponent}
\author{Swastik Kopparty\thanks{Computer Science and Artificial Intelligence
Laboratory, MIT {\tt swastik@mit.edu}.}
\and Benjamin Rossman\thanks{Computer Science and Artificial Intelligence
Laboratory, MIT {\tt brossman@mit.edu}. Supported by the National Defense Science and Engineering Graduate Fellowship.
}}

\maketitle{}

\begin{abstract}
We initiate a study of the {\em homomorphism domination exponent}
of a pair of graphs $F$ and $G$, defined as the maximum
real number $c$ such that $|\Hom(F,T)| \ge |\Hom(G,T)|^c$ for every
graph $T$. The problem of determining whether $\hde(F,G) \ge 1$ is
known as the {\em homomorphism domination problem} and its
decidability is an important open question arising in the theory of
relational databases. We investigate the combinatorial and
computational properties of the homomorphism domination exponent,
proving upper and lower bounds
and isolating classes of graphs $F$
and $G$ for which $\hde(F,G)$ is computable. 
In particular, we present a linear program computing $\hde(F,G)$ in the
special case where $F$ is chordal and $G$ is series-parallel.
\end{abstract}

\newpage

\tableofcontents{}

\section{Introduction}

A well known corollary of the Kruskal-Katona theorem states that
a graph with $e$ edges can have at most $e^{3/2}$ triangles. More
generally one may ask: given two graphs $F$ and $G$, if we know that a third
graph $T$ has $a$ copies of $F$ as a subgraph, what can we say about the number
of copies of $G$ in $T$? This paper is an attempt to pursue a systematic study of a
general question of this type.

For (directed) graphs $F$ and $G$, a {\em homomorphism from $F$ to
$G$} is a function $\varphi$ from the vertices of $F$ to the
vertices of $G$ such that for any edge $(u,v)$ of $F$, the pair
$(\varphi(u), \varphi(v))$ is an edge of $G$. The set of all
homomorphisms from $F$ to $G$ is denoted $\Hom(F,G)$, its
cardinality is denoted $\hom(F,G)$, and we write $F \to G$ if
$\hom(F,G) \ge 1$.

Given a graph $T$, one can consider the profile of its ``subgraph counts" given by
the numbers $\hom(F,T)$, as $F$ varies over all finite graphs.
The set of all possible profiles encodes much information about the local stucture
of graphs. This motivates the following central meta-question in graph theory: find all relations that
the numbers $\hom(F_1, T), \ldots, \hom(F_t, T)$ must satisfy in every graph $T$.
Unfortunately, a satisfactory understanding of these relations has thus far been elusive.
This failure is explained by the following simple but striking result (due to
Ioannidis and Ramakrishnan~\cite{IR}, discovered in the context of theoretical databases): 
given graphs $F_1, \ldots, F_t$ and integers
$a_1, \ldots, a_t$, it is undecidable whether for all graphs $T$, the
following inequality holds:
$$\sum_{i = 1}^t a_i \hom(F_i, T) \geq 0.$$
The undecidability (via a reduction to Hilbert's 10th Problem) already holds if we restrict $t = 9$.
Thus, one cannot hope to fully understand the relative magnitudes
of subgraph counts of even just $9$ graphs at a time!
Given this unfortunate fact, we set our sights a little lower, and
attempt to study the relative homomorphism numbers from two graphs.

For graphs $F$ and $G$ such that $F \to G$, the {\em homomorphism
domination exponent} of $F$ and $G$, denoted $\hde(F,G)$, is defined
as the maximal real number $c$ such that $\hom(F,T) \ge \hom(G,T)^c$
for all ``target" graphs $T$. The $\hde$ is a parameter encoding deep
aspects of the local structure of graphs, and we believe that it is worthy
of further study. As a concrete goal, here we consider the question of
computing $\hde(F,G)$ given graphs $F$ and $G$.

Another motivation for the $\hde$ comes from the theory
of databases. The {\em containment problem for conjunctive queries} ({\em under
multiset semantics}), a problem of much importance in database theory,
is equivalent to the {\em homomorphism domination problem} in graph theory
which asks, given graphs $F$ and $G$, whether $\hom(F,T) \ge \hom(G,T)$ for all graphs
$T$. The homomorphism domination exponent is a quantitative version of the
homomorphism domination problem (or the conjunctive query containment problem);
note that the homomorphism domination problem is simply the question whether \mbox{$\hde(F,G) \ge 1$.}

Many classical inequalities involving graphs are naturally viewed in terms of the homomorphism domination exponent. For example, the Kruskal-Katona Theorem determines the maximum number of triangles in a graph with a given number of edges. This relationship is captured by the equality
$\hde(\,\raisebox{-2pt}{\includegraphics{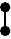}}\,,\raisebox{-2pt}{\includegraphics{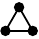}}) = 2/3$. Similarly, a result of K\"ov\'ari, S\'os and Tur\'an~\cite{KST}, which establishes a relationship between the numbers of vertices, edges and $4$-cycles in a graph $G$, states that $\hom(C_4,G) \geq \big(\hom(\,\raisebox{-2pt}{\includegraphics{edge.pdf}}\,,G)/\hom({\scriptstyle\bullet}\,,G)\big){}^4$.
This is summarized by the inequality
$\hde( C_4 + {\scriptstyle\bullet\,\bullet\,\bullet\,\bullet\,} 
, \,\raisebox{-2pt}{\includegraphics{edge.pdf}}\,) \geq 4$.
In Section~\ref{sub:known} we give an overview of known results from extremal
combinatorics that imply general bounds on the homomorphisms domination
exponent.

Our principal objective in this paper is to give algorithms for
computing and bounding the homomorphism domination exponent. We
introduce new combinatorial techniques for proving inequalities
between homomorphism numbers and establishing their tightness.

\subsection{Overview of Results}\label{sec:overview}

We prove a lower bound on $\hde(F,G)$ when $F$ is chordal and $G$ is any graph such that $F \to G$.  This lower bound has the form of a linear program over the convex set of $G$-polymatroidal functions (defined in Section~\ref{sec:G-polymat}).  In the special case where $F$ is chordal and $G$ is series-parallel, this linear program computes $\hde(F,G)$ exactly.  A relaxation of this linear program turns out to be an upper bound on $\hde(F,G)$ for all graphs $F$ and $G$.  These results are stated formally in Section~\ref{sec:results}.

Our bounds yield several new inequalities for graph homomorphism numbers. For instance:
\begin{gather*}
  \hde\left(\raisebox{-6pt}{\includegraphics{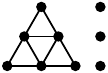}}\,,\,
  \raisebox{-6pt}{\includegraphics{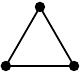}}\right)
  = \frac{5}{2},\\
  \hde\big(\text{any directed tree of size }n,\,
  \text{the directed $n$-cycle }\vec C_n\big) = 1.
\end{gather*}
Let $P_n$ denote the undirected path of size $n$ (with $n$ vertices and $n-1$ edges). Our main theorem implies:
\begin{align*}
	&&&&\hde(P_m,P_n) &= 1 &&\text{when }m \ge n,&&&&\\
	&&&&\hde(P_m,P_n) &= m/n &&\text{when $m \le n$ and $m$ is odd.}&&&&
\end{align*}
However, when $m \le n$ and $m$ is even, the value of $\hde(P_m,P_n)$ is slightly less than $m/n$ (by an amount that depends on $n \mod m$):
\begin{align*}
    \hde(P_2,P_n) &= 
    		1/\lceil n/2 \rceil,\\
	\ds\hde(P_4,P_{4n+i}) &= \begin{cases}
        1/n 
        		&\text{if }i=0,\\
        2/(2n+1) 
        		&\text{if }i=1,\\
        (4n+1)/(4n^2+3n+1) 
        		&\text{if }i=2,\\
        1/(n+1)
        		&\text{if }i=3.
    \end{cases}
\end{align*}
These expressions were discovered by solving the linear program in our main theorem for small values of $n$ (which then suggested proofs for arbitrary $n$).  The equation $\hde(P_4,P_{4n+2}) = (4n+1)/(4n^2+3n+1)$ (stated as Theorem~\ref{thm:P_4}) in particular stands out as an example of an intriguing phenomenon associated with the HDE. Its proof (included in \S\ref{sec:P_4}) seems like it might be hard to come up with by hand. We remark that finding a closed expression for $\hde(P_m,P_n)$ for all $m$ and $n$ is an open problem.

By contrast, $\hde(C_m,C_n)$ for cycles $C_m$ and $C_n$ contains no surprises. An anonymous referee pointed out that H\"older's inequality implies that $\hde(C_m,C_n) = \min(m/n,1)$ in all cases when $C_m \to C_n$ (i.e.,\ $m$ is even or $n$ is odd and $m \ge n$). 

Finally, we mention that our results (Theorem~\ref{thm:lower}) can be used to give another proof---using entropy methods---of Sidorenko's conjecture \cite{Sid} for the special case of forests.

\subsection{The Method via an Example}\label{sub:method}

We prove our bounds using an approach based on entropy and linear
programming. We now briefly illustrate our methods in action on a
simple example. The argument is inspired by the entropy proof of
Shearer's lemma, often attributed to Jaikumar Radhakrishnan, and its generalizations
due to Friedgut and Kahn~\cite{FK,Friedgut}.

Consider the graphs $\Vee$ and $\Tri$ pictured below.
\[
    \includegraphics[scale=.85]{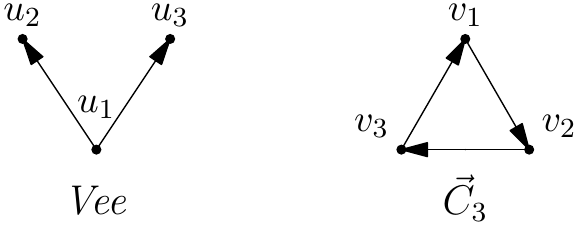}
\]
We will prove that $\hde(\Vee,\Tri) = 1$. (This problem was posed by
Erik Vee~\cite{Vee}; a different solution and generalization were
given by Rossman and Vee~\cite{RV}.) As $\hom(\Vee,\Tri) =3$ and
$\hom(\Tri,\Tri) = 3$, we have $\hde(\Vee,\Tri) \leq 1$. It remains
to show that for all graphs $T$, $\hom(\Vee,T) \geq \hom(\Tri,T)$.
To that end, fix an arbitrary graph $T$ such that $\Tri \to T$. Pick
$\chi$ uniformly at random from $\Hom(\Tri,T)$. For $i = 1,2,3$, let
$a_i = \chi(v_i)$. Observe that the joint distribution $(a_1, a_2,
a_3)$ is uniform on a subset of $V_T \times V_T \times V_T$ of size
$\hom(\Tri,T)$. Thus $\Ent(a_1, a_2, a_3) = \log \hom(\Tri,T)$. We
now prove that $\Ent(a_1, a_2, a_3) \leq \log \hom (\Vee,T)$.

By the chain rule of entropy,
$$\Ent(a_1, a_2, a_3) = \Ent(a_1) + \Ent(a_2 | a_1) + \Ent(a_3 | a_1, a_2).$$
As conditioning on fewer variables can only increase entropy, we get
$$\Ent(a_1, a_2, a_3) \leq \Ent(a_1) + \Ent(a_2 | a_1) + \Ent(a_3 | a_2).$$
Now, by cyclic symmetry of $a_1, a_2, a_3$, we have $\Ent(a_3 | a_2)
= \Ent(a_2 | a_1)$. Thus,
\begin{equation}
\label{tri-vee} \Ent(a_1, a_2, a_3) \leq \Ent(a_1) + 2 \Ent(a_2 |
a_1).
\end{equation}
We will now interpret this expression. Consider the distribution
$(x,y,y')$ on $V_T \times V_T \times V_T$ defined as follows. First, $x
\in V_T$ is picked according to the distribution of $a_1$. Next, two
independent copies $y, y' \in V_T$ of $a_2$ {conditioned on $a_1 = x$}
are picked. The entropy of $(x,y,y')$ is easily computed:
$$\Ent(x,y,y') = \Ent(x) + \Ent(y|x) + \Ent(y'|x) = \Ent(a_1) + \Ent(a_2|a_1) + \Ent(a_2 | a_1).$$
Thus, we have $\Ent(a_1, a_2, a_3) \leq \Ent(x,y,y')$ by
\eqref{tri-vee}.

Distribution $(x, y, y')$ was constructed so that there is always an
edge from $x$ to $y$ as also from $x$ to $y'$. Thus, every point of
$V_T \times V_T \times V_T$ in the support of the distribution of
$(x,y,y')$ specifies a unique homomorphism in $\Hom(\Vee,T)$, namely
the map $u_1 \mapsto x$, $u_2 \mapsto y$ and $u_3 \mapsto y'$.
This implies that $\log\hom(\Tri,T) =\Ent(a_1,a_2,a_3) \leq \log
\hom(\Vee,T)$, completing the proof.

The proof of our lower bound on $\hde(F,G)$ for chordal graphs $F$
and arbitrary graphs $G$ follows the same strategy as the argument
above. When we want to prove that for all $T$, $\hom(F,T) \geq
\hom(G,T)^c$, we start with a uniform distribution on $\Hom(G,T)$.
We analyze its entropy and compare it with the entropy of several
auxiliary distributions that we construct on $\Hom(F,T)$. The
construction of the auxiliary distributions, as well as the analysis
and comparisons of entropies are guided by a linear program.

\subsection{Related Work}\label{sub:known}

Several computational problems closely related to the computability of
the homomorphism domination exponent are known to be undecidable.
Validity of linear inequalities
involving homomorphism numbers was shown to be undecidable by
\cite{IR} via a reduction from Hilbert's 10th problem on solvability of
integer diophantine equations. The homomorphism domination problem
with ``inequality constraints" is also known to be undecidable~\cite{JKV}.

Inequalities between homomorphism numbers have been extensively
studied in extremal combinatorics. For a survey, see \cite{BCLSV}.
Very few general results are known about the homomorphism domination exponent (defined here for the first time, but implicitly studied before). Alon
\cite{Alon} showed that if $e$ is an undirected edge and $G$ is any simple
graph, then $\hde(e, G) = \frac{1}{\rho(G)}$, where $\rho(G)$ is the
{\em fractional edge covering number} of $G$. This result was
reproved and generalized to hypergraphs by Friedgut and
Kahn~\cite{FK}. Their argument used Shearer's lemma, which is
closely related to the entropy techniques that we use. A wonderful
exposition on using entropy and Shearer's lemma to prove classical
inequalities can be found in \cite{Friedgut}. Galvin and Tetali
\cite{GT}, generalizing an argument of Kahn \cite{Kahn}, also using
entropy techniques, showed that for any $n$-regular, $N$-vertex
bipartite graph $G$, $\hde(K_{n,n}, G) = \frac{2n}{N}$.
Finally, a very general approach to inequalities between homomorphism
numbers in dense graphs was developed in~\cite{BCLSV, Razborov}.
However, it is not known whether this approach can yield algorithms
for deciding validity of special families of inequalities between homomorphism numbers.

The entropy arguments that we use differ from the above applications
in that we utilize finer information about conditional entropy. The
key technical device that enables us to use this information is the
construction of auxiliary distributions using conditionally
independent copies of the same random variable. This is exemplified
in the example of the previous subsection by our definition of the
distribution $(x,y,y')$.

\paragraph{\bf Paper Organization.}

Section~\ref{sec:prelims} introduces the necessary definitions and tools related to graphs and homomorphisms. Our results are formally stated in Section~\ref{sec:results}. Definitions and auxiliary lemmas on Markov random fields are given in Section~\ref{sec:MRF}. Proofs of our main theorems are presented in Sections~\ref{sec:lb}, \ref{sec:ub}, \ref{sec:tight} and \ref{sec:P_4}. We state our conclusions in Section~\ref{sec:conclusion}.

\section{Preliminaries}\label{sec:prelims}

We first fix some basic notation.
    For a natural number $n$, let $[n]$ denote the set
    $\{1,\dots,n\}$.
    The powerset of a set $X$ is denoted by $\wp(X)$.
    If $\mc S$ is a family of sets, let $\bigcap \mc S$ denote the
    intersection $\bigcap_{S \in \mc S} S$. We adopt the convention that $\bigcap\emptyset = \emptyset$.

\subsection{Graphs and Homomorphisms}

{\em Graphs} will be finite and directed. Formally, a graph is a
pair $G = (V_G,E_G)$ where $V_G$ is a nonempty finite set and $E_G$
is a subset of $V_G \times V_G$. For a subset $A \subseteq V_G$, we
denote by $G|_A$ the induced subgraph of $G$ with vertex set $A$. We
denote by $k{\cdot}G$ the disjoint union of $k$ copies of $G$. The
{\em (categorical) product} $F \times G$ of graphs $F$ and $G$ has
vertex set $V_{F\times G} = V_F \times V_G$ and edge set $E_{F\times
G} = \{((a,v),(b,w)) : (a,b) \in E_F$ and $(v,w) \in E_G\}$.

A graph $G$ is {\em simple} if the relation $E_G$ is antireflexive
and symmetric, i.e., if $(v,w) \in E_G$ then $v \ne w$ and $(w,v)
\in E_G$. Every graph $G$ is associated with a simple graph $\BAR G$
defined by $V_{\BAR G} = V_G$ and $E_{\BAR G} = \{(v,w) : v \ne w$
and $(v,w) \in E_G$ or $(w,v) \in E_G\}$. Whenever we speak of
cliques, connectivity, etc., of $G$, we mean cliques, connectivity,
etc., of the associated simple graph $\BAR G$. In particular, a {\em
clique} in a graph $G$ is a set of vertices $A \subseteq V_G$ such
that $(v,w) \in E_G$ or $(w,v) \in E_G$ for all distinct $v,w \in
A$. We denote by $\Cliques(G)$ the set of cliques in $G$ and by
$\MaxCliques(G)$ the set of maximal cliques in $G$. The number of
connected components of $G$ is denoted by $\components(G)$.

A {\em homomorphism} from a graph $F$ to a graph $G$ is a function
$\varphi : V_F \longra V_G$ such that $(\varphi(a),\varphi(b)) \in
E_G$ for all $(a,b) \in E_F$. Let $\Hom(F,G)$ denote the set of
homomorphisms from $F$ to $G$ and let $\hom(F,G) = |\Hom(F,G)|$.
Notation $F \to G$ expresses $\hom(F,G) \ge 1$. Under disjoint
unions ($+$) and categorical graph product ($\times$), $\hom(\ul{\
\,},\ul{\ \,})$ obeys identities
\begin{align*}
    \hom(F_1 + F_2, G) &= \hom(F_1, G) \cdot \hom(F_2,G),\\
    \hom(F, G_1 \times G_2) &= \hom(F, G_1) \cdot \hom(F, G_2).
\end{align*}

A graph $F$ is {\em chordal} if the simple graph $\BAR F$ contains
no induced cycle of size $\ge 4$. Chordal graphs are alternatively
characterized by the existence of an elimination ordering. A vertex
$v$ is {\em eliminable} in a graph $F$ if the neighborhood of $v$ is a clique in $F$.
An enumeration $v_1,\dots,v_n$ of $V_F$ is an {\em elimination ordering} for $F$ if
$v_j$ is eliminable in $F|_{\{v_1,\dots,v_j\}}$ for all $j \in [n]$.
By a well-known characterization, a graph $F$ is chordal if and only
if it has an elimination ordering.

A {\em $2$-tree} is a chordal graph with clique number at most $3$
(i.e., containing no $K_4$). A graph $G$ is {\em series-parallel} if
$G$ is a subgraph of some $2$-tree.

\subsection{The Homomorphism Domination Exponent}

We now formally define the homomorphism domination exponent.

\begin{df}[Homomorphism Domination Exponent]\label{df:hde-def}
For graphs $F$ and $G$ such that $F \to G$,\footnote{We do not
define $\hde(F,G)$ whenever $F \not\to G$. However, it might be a
reasonable convention to let $\hde(F,G) = -\infty$.} the {\em
homomorphism domination exponent} $\hde(F,G)$ is defined by
\begin{equation}\notag
    \hde(F,G) = \sup\big\{c \in \R : \hom(F,T) \ge \hom(G,T)^c
    \text{ for all graphs }T\big\}.
\end{equation}
We write $F \succcurlyeq G$ and say $F$ {\em homomorphism-dominates}
$G$ if $\hde(F,G) \ge 1$.
\end{df}

The following dual expression for $\hde(F,G)$ is often useful:
\begin{equation}
\label{hdeotherdef}
    \hde(F,G) = \inf_{T\,:\,\hom(G,T) \ge 2} \frac{\log\hom(F,T)}{\log\hom(G,T)}.
\end{equation}
We remark that this $\inf$ is not always a $\min$.

The following lemma (proof omitted) lists some basic properties of the homomorphism
domination exponent.

\begin{la}[Basic Properties of $\hde$]\label{la:homdom-facts}
\
\begin{enumerate}[\normalfont\hspace{\parindent}(a)]\setlength{\itemsep}{2pt}
  \item
    If $c = \hde(F,G)$, then $\hom(F,T) \ge \hom(G,T)^c$ for all
    graphs $T$. (That is, we can replace $\sup$ by $\max$ in
    Definition~\ref{df:hde-def}.)
  \item
    The homomorphism-domination relation $\succcurlyeq$ is a partial
    order on graphs.
  \item
    $\hde(F,H) \ge \hde(F,G)\cdot\hde(G,H)$.
  \item
    $\hde(m{\cdot} F,n{\cdot} G) = \frac m n \cdot \hde(F,G)$ for all 
    positive integers $m,n$.
  \item
    If there exists a surjective homomorphism from $F$ onto $G$,
    then $F \succcurlyeq G$.
  \item
    $\hde(F,G) > 0$ if and only if $\bigcup_{\varphi \in
    \Hom(F,G)} \mr{Range}(\varphi) = V_G$.
\end{enumerate}
\end{la}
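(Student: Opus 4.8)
The plan is to verify the six items essentially in the order (a), (c), (d), (e), (b), (f) — the transitivity clause of (b) uses (c) — and I note in advance that the only step with real content is the antisymmetry clause of (b), with the ``only if'' half of (f) a close second. Throughout I will write $S_{F,G}=\{c\in\R:\hom(F,T)\ge\hom(G,T)^{c}\text{ for all graphs }T\}$, so $\hde(F,G)=\sup S_{F,G}$, and I will use $\hom(k\cdot F,T)=\hom(F,T)^{k}$ freely.

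For (a) I would show the supremum is attained: setting $c^{\ast}=\hde(F,G)$, if $\hom(F,T_{0})<\hom(G,T_{0})^{c^{\ast}}$ for some $T_{0}$ then the case $\hom(G,T_{0})\le 1$ is immediate (for $\hom(G,T_{0})=1$ one uses $F\to G\to T_{0}$), so $\hom(G,T_{0})\ge 2$, and then $r:=\log\hom(F,T_{0})/\log\hom(G,T_{0})$ is a real number with $r<c^{\ast}=\sup S_{F,G}$, so some $c\in S_{F,G}$ has $r<c$, giving the contradiction $\hom(F,T_{0})\ge\hom(G,T_{0})^{c}>\hom(G,T_{0})^{r}=\hom(F,T_{0})$. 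Part (c) is then immediate from (a): with $a=\hde(F,G)$, $b=\hde(G,H)$ one gets $\hom(F,T)\ge\hom(G,T)^{a}\ge\hom(H,T)^{ab}$ for all $T$, so $ab\in S_{F,H}$. For (d) I would take $m$-th roots to see that $\hom(m\cdot F,T)\ge\hom(n\cdot G,T)^{c}$ is equivalent to $\hom(F,T)\ge\hom(G,T)^{nc/m}$, whence $S_{m\cdot F,\,n\cdot G}=\tfrac{m}{n}\,S_{F,G}$ (and $m\cdot F\to n\cdot G$ since $\hom(F,n\cdot G)\ge\hom(F,G)\ge 1$). For (e), if $\psi\colon F\to G$ is a surjective homomorphism then $\varphi\mapsto\varphi\circ\psi$ is an injection $\Hom(G,T)\hookrightarrow\Hom(F,T)$ (two composites agree on $\mr{Range}(\psi)=V_{G}$), so $\hom(F,T)\ge\hom(G,T)$ for all $T$, i.e.\ $\hde(F,G)\ge 1$.

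For (b): reflexivity is $\hom(F,T)\ge\hom(F,T)^{1}$, so $\hde(F,F)\ge 1$, and transitivity is the case $a=b=1$ of (c). The content is antisymmetry: if $F\succcurlyeq G$ and $G\succcurlyeq F$ then $\hom(F,T)=\hom(G,T)$ for every graph $T$, and from this I would conclude $F\cong G$ using the classical fact (essentially due to Lov\'asz) that a finite graph is determined up to isomorphism by the counts $T\mapsto\hom(F,T)$; concretely, inclusion--exclusion over the subsets of $V_{T}$ shows those counts determine the numbers of vertex-surjective homomorphisms $F\to T$, which on taking $T=G$ and $T=F$ forces bijective homomorphisms both ways between $F$ and $G$, and a comparison of vertex- and edge-counts then upgrades these to an isomorphism. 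I expect establishing (or carefully citing) this uniqueness statement to be the main obstacle in the whole lemma; everything else is bookkeeping.

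For (f), put $U=\bigcup_{\varphi\in\Hom(F,G)}\mr{Range}(\varphi)\subseteq V_{G}$, which is nonempty since $F\to G$. If $U=V_{G}$, I would pick for each $v\in V_{G}$ a homomorphism $\varphi_{v}\in\Hom(F,G)$ and a vertex $a_{v}\in V_{F}$ with $\varphi_{v}(a_{v})=v$; then $\psi\mapsto(\psi\circ\varphi_{v})_{v\in V_{G}}$ injects $\Hom(G,T)$ into $\Hom(F,T)^{V_{G}}$ (the $V_{G}$-indexed tuples of homomorphisms $F\to T$), since $\psi(v)$ is recovered as $(\psi\circ\varphi_{v})(a_{v})$; hence $\hom(G,T)\le\hom(F,T)^{|V_{G}|}$ for all $T$ and $\hde(F,G)\ge 1/|V_{G}|>0$. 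Conversely, if $U\subsetneq V_{G}$, set $W=V_{G}\setminus U\ne\emptyset$ and, for $N\ge 2$, let $T_{N}$ be the blow-up of $G$ in which each $u\in U$ is kept as a single vertex, each $w\in W$ is replaced by $N$ copies, and a copy of $v$ is joined to a copy of $v'$ exactly when $(v,v')\in E_{G}$. There is a projection homomorphism $\pi\colon T_{N}\to G$, and choosing one copy of each vertex of $G$ yields $N^{|W|}$ homomorphisms $G\to T_{N}$, so $\hom(G,T_{N})\ge N^{|W|}$; on the other hand, for any $\varphi\in\Hom(F,T_{N})$ the composite $\pi\circ\varphi\in\Hom(F,G)$ has range inside $U$, where $\pi$ has singleton fibres, so $\varphi$ is determined by $\pi\circ\varphi$ and therefore $\hom(F,T_{N})\le\hom(F,G)$ — a constant independent of $N$. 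Consequently no $c>0$ can lie in $S_{F,G}$ (it would force $\hom(F,G)\ge\hom(F,T_{N})\ge N^{c|W|}\to\infty$), so $\hde(F,G)\le 0$, which is the contrapositive of the ``only if'' direction. The only delicate point here is setting up the blow-up $T_{N}$ and its two homomorphism-count estimates cleanly given that edges are directed and loops are allowed, but I do not expect any genuine difficulty.
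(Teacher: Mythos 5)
The paper gives no proof of this lemma (it is explicitly ``proof omitted''), so there is nothing to compare against line by line; judged on its own, your proposal is correct and supplies essentially the arguments the authors surely had in mind. Parts (a), (c), (d), (e) are the expected bookkeeping, and your two substantive steps are sound: antisymmetry in (b) really does require the Lov\'asz-type uniqueness statement, and you correctly identify that you need the \emph{first-argument} version ($\hom(F,T)=\hom(G,T)$ for all $T$ implies $F\cong G$), which your inclusion--exclusion over induced subgraphs of the target, giving vertex-surjective counts and then an edge-count comparison, does establish; and your blow-up $T_N$ in the ``only if'' half of (f) is exactly in the spirit of the target-graph constructions the paper itself uses in Sections 5--7 (the graphs $T_n$ fibred over $G$ via a projection $\pi_n$). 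Two small points you should tighten: in (a), the subcase $\hom(G,T_0)=0$ forces you to commit to a convention for $0^{c}$ (the paper sidesteps this by working with the dual formula restricted to $\hom(G,T)\ge 2$, and you may as well do the same); and in (c), passing from $\hom(G,T)\ge\hom(H,T)^{b}$ to $\hom(G,T)^{a}\ge\hom(H,T)^{ab}$ uses $a\ge 0$, which holds but deserves the one-line justification that $\hde$ is always nonnegative when $F\to G$ (since $\hom(G,T)\ge 1$ implies $\hom(F,T)\ge 1$, every ratio in the dual formula is $\ge 0$). Neither issue affects the substance.
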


By \eqref{hdeotherdef}, every graph $T$ with $\hom(G, T) \ge 2$
provides an upper bound on $\hde(F,G)$. By taking specific graphs
$T_1$, $T_2$ and $(T_{3,n})_{n \ge 1}$ in the figure below, we get
the following general upper bounds on $\hde(F,G)$.
\begin{center}
    \includegraphics{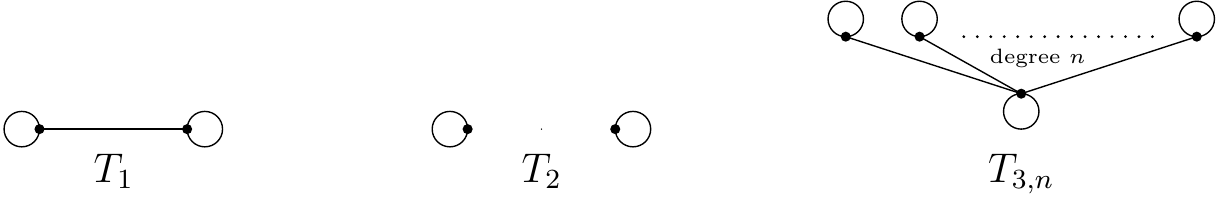}
\end{center}
Taking $T = T_1$, we get the upper bound $\hde(F,G) \leq |V_F| /
|V_G|$. Taking $T = T_2$, we have that $\hde(F,G) \leq
\components(F)/ \components(G)$. A slightly more complicated upper
bound follows by taking $T = T_{3,n}$ and letting $n \to \infty$;
the result is that $\hde(F,G)$ is at most the ratio
$\alpha(F)/\alpha(G)$ of the independence numbers of $F$ and $G$,
since $\hom(H,T_{3,n})$ grows like $\Theta(n^{\alpha(H)})$ for every
graph $H$.

\subsection{$G$-Polymatroidal Functions}\label{sec:G-polymat}

\begin{df}\label{df:G-polymat}
For a graph $G$, let $\mc P(G)$ and $\mc Q(G)$ be the following sets of functions from $\wp(V_G)$ to $[0,1]$.
\begin{itemize}\setlength{\itemsep}{4pt}
  \item
	A function $p : \wp(V_G) \lra \R$ is {\em $G$-polymatroidal} if it satisfies the following four conditions:
	\begin{enumerate}[] 
	  \item
	  	\makebox[1.3in]{($0$ at $\emptyset$)\hfill} $p(\emptyset) = 0$,
	  \item
		\makebox[1.3in]{(monotone)\hfill} $p(A) \le p(B)$ for all $A \subseteq B \subseteq V_G$,
	  \item
	  	\makebox[1.3in]{(submodular)\hfill} $p(A \cap B) + p(A \cup B) \le p(A) + p(B)$ for all $A,B \subseteq V_G$,
	  \item
	  	\makebox[1.3in]{($G$-independent)\hfill} 
		\parbox[t]{3.35in}{$p(A \cap B) + p(A \cup B) = p(A) + p(B)$ for all $A,B \subseteq V_G$  such that $A \cap B$ separates $A \setminus B$ and $B \setminus A$ in $G$ (i.e., there is no edge in $G$ between $A \setminus B$ and $B \setminus A$).}
	\end{enumerate}
	A $G$-polymatroidal function $p$ is {\em normalized} if in addition it satisfies:
	\begin{enumerate}[] 
	  \item
	  	\makebox[1.3in]{(normalized)\hfill} $p(V_G) = 1$.
	\end{enumerate}
  \item
	$\mc P(G)$ denotes the set of normalized $G$-polymatroidal functions.
  \item
  	$\mc Q(G)$ denotes the set of functions $q : \wp(V_G) \lra \R$ which satisfy:
\[
	q(\emptyset) = 0, \qquad q(A) \ge 0\text{ for all }A \subseteq V_G, \qquad \sum_{A \subseteq V_G} q(A) \cdot \components(G|_A) = 1.
\]
\end{itemize}
\end{df}

\begin{ex}\label{ex:K_3}
Let $a,b,c$ be the vertices of $K_3$. Then $\mc P(K_3)$ is the set of convex combinations of eight functions from $\wp(\{a,b,c\})$ to $[0,1]$, which we label as $f_a,f_b,f_{ab},f_{ac},f_{bc},f_{abc}$ (corresponding to the seven
nonempty subsets of $\{a,b,c\}$) and $f_{\mr{RS}}$ (``\textrm{RS}'' stands for Ruzsa-Szemer\'edi, for reasons that will be explained later on), given by the following table:\upshape
\[
\begin{tabular}{l||c|c|c|c|c|c|c|c|}
   \parbox{38pt}{\mbox{}}
 & \parbox{38pt}{\mbox{}\hfill$\emptyset$\hfill\mbox{}}
 & \parbox{38pt}{\mbox{}\hfill$\{a\}$\hfill\mbox{}}
 & \parbox{38pt}{\mbox{}\hfill$\{b\}$\hfill\mbox{}}
 & \parbox{38pt}{\mbox{}\hfill$\{c\}$\hfill\mbox{}}
 & \parbox{38pt}{\mbox{}\hfill$\{a,b\}$\hfill\mbox{}}
 & \parbox{38pt}{\mbox{}\hfill$\{a,c\}$\hfill\mbox{}}
 & \parbox{38pt}{\mbox{}\hfill$\{b,c\}$\hfill\mbox{}}
 & \parbox{38pt}{\mbox{}\hfill$\{a,b,c\}$\hfill\mbox{}}\\
  \hline\hline\quad$f_{a}$& 0 & 1 & 0 & 0 & 1 & 1 & 0 & 1\\
 \hline\quad$f_{b}$& 0 & 0 & 1 & 0 & 1 & 0 & 1 & 1\\
 \hline\quad$f_{c}$& 0 & 0 & 0 & 1 & 0 & 1 & 1 & 1\\
 \hline\quad$f_{ab}$& 0 & 1 & 1 & 0 &1 &1 & 1&1\\
 \hline\quad$f_{ac}$& 0 & 1 & 0 & 1 &1 &1 & 1&1\\
 \hline\quad$f_{bc}$& 0 & 0 & 1 & 1 &1 &1 & 1&1\\
 \hline\quad$f_{abc}$& 0 &1&1&1&1&1&1&1\\
 \hline\quad$f_{\mr{RS}}$ & 0 & $1/2
 $ & $1/2
 $ & $1/2
 $ &
 1&1&1&1\\
 \hline
\end{tabular}
\]
\end{ex}

We will use the following identity for $G$-polymatroidal functions when $G$ is chordal. 
\begin{la}[Identity for Chordal-Polymatroidal Functions]\label{la:entropy-of-chordal-MRF} 
If $G$ is chordal, then for every $G$-polymatroidal function $p : \wp(V_G) \lra \R$ and every elimination ordering $v_1,\dots,v_n$ for $G$,
\begin{align*}
    p(V_G) &= \sum_{S \subseteq \MaxCliques(G)} -(-1)^{|S|} p({\ts\bigcap}S)\\
    &= \sum_{i=1}^n p\big(
    \{\textup{neighbors of $v_i$ among }v_1,\dots,v_{i-1}\}
    \cup \{v_i\}\big) - p\big(
    \{\textup{neighbors of $v_i$ among }v_1,\dots,v_{i-1}\}\big).
\end{align*}
\end{la}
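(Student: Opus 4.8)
The plan is to prove the two displayed equalities separately and then note they agree. For the first equality, I would use inclusion--exclusion on the $G$-independence property. The key observation is that in a chordal graph, the maximal cliques can be arranged in a clique tree (running intersection property), so that the union of all maximal cliques is $V_G$ and the intersection pattern is controlled. Concretely, I would order the maximal cliques $C_1,\dots,C_m$ consistently with a clique tree, and build up $V_G$ by adding one maximal clique at a time: at each step, $\bigl(\bigcup_{j<k} C_j\bigr) \cap C_k$ is a clique (by the running intersection property it equals $C_k \cap C_t$ for some earlier $t$), and this intersection separates $C_k \setminus \bigl(\bigcup_{j<k}C_j\bigr)$ from the rest. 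Applying the $G$-independence equality repeatedly and bookkeeping the inclusion--exclusion terms yields $p(V_G) = \sum_{\emptyset \ne S \subseteq \MaxCliques(G)} -(-1)^{|S|} p(\bigcap S)$; here one uses that $p(\bigcap S) = 0$ unless $\bigcap S$ is actually realized as an intersection along the tree, but in fact the clean way is to prove by induction on $m$ that $p\bigl(\bigcup_{j \le k} C_j\bigr) = \sum_{\emptyset \ne S \subseteq \{C_1,\dots,C_k\}} -(-1)^{|S|} p(\bigcap S)$, with the inductive step handled by one application of $G$-independence together with the ordinary inclusion--exclusion identity.

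For the second equality, I would argue directly from the elimination ordering, which is the more elementary of the two and essentially mimics the entropy chain rule. Write $N_i = \{\text{neighbors of } v_i \text{ among } v_1,\dots,v_{i-1}\}$ and $V_i = \{v_1,\dots,v_i\}$. The claim is that
\begin{equation*}
    p(V_i) = p(V_{i-1}) + p(N_i \cup \{v_i\}) - p(N_i)
\end{equation*}
for each $i$, which telescopes to the desired formula since $p(V_0) = p(\emptyset) = 0$. To see this single step, apply the $G$-independence property with $A = V_{i-1}$ and $B = N_i \cup \{v_i\}$: then $A \cap B = N_i$ (since the only neighbors of $v_i$ in $V_{i-1}$ are exactly $N_i$), $A \cup B = V_i$, and I must check that $N_i$ separates $A \setminus B$ from $B \setminus A = \{v_i\}$ in $G$ --- but that is immediate, because every neighbor of $v_i$ in $V_{i-1}$ lies in $N_i$, so there is no edge from $v_i$ to $A \setminus B$. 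Here I do \emph{not} even need eliminability; this step works for any ordering. (Eliminability is what makes $N_i \cup \{v_i\}$ a clique, which is needed only for the interpretation of the formula, e.g.\ in the entropy application, not for the identity itself.)

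Finally, to reconcile the two right-hand sides, I would observe that both computations are instances of the same principle and give equal values to $p(V_G)$; alternatively one can check directly, using that the sets $N_i \cup \{v_i\}$ along an elimination ordering are exactly (with multiplicity accounted for) the building blocks of a clique tree, so the alternating sum over subsets of maximal cliques collapses to the telescoping sum. Since the statement only asserts $p(V_G)$ equals each expression, it suffices to prove each equality against $p(V_G)$ independently, and I would present it that way to avoid a fiddly direct comparison.

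\textbf{Main obstacle.} The delicate part is the first equality: one must invoke the structure theory of chordal graphs (clique trees / running intersection) and carefully set up the induction so that each step is a legitimate application of $G$-independence --- in particular verifying that $\bigl(\bigcup_{j<k} C_j\bigr) \cap C_k$ is a clique and \emph{does} separate in $G$, and then matching the algebra of the inductive inclusion--exclusion expansion. The second equality, by contrast, is a short telescoping argument and should go through with essentially no friction.
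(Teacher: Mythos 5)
Your proposal is correct, but it takes a genuinely different route from the paper for the harder half. The paper marks this lemma ``proof omitted,'' but the intended inductive argument (spelled out in the source for the entropy analogue \eqref{eq:entropy-of-chordal-MRF}) is a \emph{single} induction on $|V_G|$: peel off an eliminable vertex $v$, apply $G$-independence with $A = V_G\setminus\{v\}$ and $B = N(v)\cup\{v\}$ --- exactly your telescoping step --- and then invoke a \emph{strengthened} induction hypothesis in which the alternating sum ranges over an arbitrary family $\mc C$ with $\MaxCliques(G)\subseteq\mc C\subseteq\Cliques(G)$; this generalization is what absorbs the bookkeeping when $N(v)$ fails to be a maximal clique of $G\setminus v$, and it yields both displayed expressions at once without ever mentioning clique trees. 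You instead prove the two equalities separately: the elimination-ordering sum by telescoping (identical to the paper's inductive step, and you are right that eliminability is not needed for that identity, only for interpreting $N_i\cup\{v_i\}$ as a clique), and the alternating sum by a second induction over the maximal cliques ordered along a clique tree. Your clique-tree induction does go through, but one phrase deserves a warning: ``together with the ordinary inclusion--exclusion identity'' hides the real point. After $G$-independence gives $p(U_k) = p(U_{k-1}) + p(C_k) - p(U_{k-1}\cap C_k)$, the increment of the alternating sum equals $p(C_k) - \sum_{\emptyset\ne S'\subseteq\{C_1,\dots,C_{k-1}\}} -(-1)^{|S'|}\, p\bigl(\bigcap_{j\in S'}(C_k\cap C_j)\bigr)$, and $p$ does \emph{not} satisfy inclusion--exclusion on subsets of a single clique (no separation, hence no $G$-independence, is available there). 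What saves you is purely combinatorial: by running intersection, $U_{k-1}\cap C_k = C_k\cap C_t$ for a single earlier $t$ and $C_k\cap C_j\subseteq C_k\cap C_t$ for all $j<k$, so pairing $S'$ with $S'\cup\{C_t\}$ cancels every term except $p(C_k\cap C_t)$, using no property of $p$ at all. (Your earlier parenthetical that ``$p(\bigcap S)=0$ unless $\bigcap S$ is realized along the tree'' is false, but you correctly discard it in favor of the induction.) Net comparison: your route buys a clean separation of the two identities at the cost of importing clique-tree machinery plus this sign-cancellation; the paper's route is more elementary vertex-by-vertex but pays with a less obvious strengthening of the induction hypothesis.
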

Lemma~\ref{la:entropy-of-chordal-MRF} is established by a straightforward inductive argument (proof omitted).

\section{Results}\label{sec:results}

Our first theorem gives a lower bound on $\hde(F,G)$ when $F$ is
chordal.

\begin{thm}\label{thm:lower}
If $F$ is chordal and $G$ is any graph, then
\begin{align*}
  \hde(F,G) &\ge \min_{p \in \mc P(G)} \max_{\varphi \in
  \Hom(F,G)} \sum_{S \subseteq
  \MaxCliques(F)} -(-1)^{|S|} \cdot p(\varphi({\ts\bigcap} S)).
\end{align*}
\end{thm}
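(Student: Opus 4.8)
The plan is to run the entropy-and-linear-programming argument illustrated by the $\Vee,\Tri$ example, with the elimination ordering of the chordal graph $F$ playing the role that cyclic symmetry played there. (The statement implicitly assumes $F\to G$, so that $\Hom(F,G)\ne\emptyset$.) Write $c$ for the value of the min-max on the right-hand side. By the dual expression~\eqref{hdeotherdef} it suffices to fix an arbitrary graph $T$ with $\hom(G,T)\ge 2$, set $L:=\log\hom(G,T)>0$, and show $\log\hom(F,T)\ge cL$. Let $\chi$ be uniform on $\Hom(G,T)$ and, for $A\subseteq V_G$, set $p(A):=\Ent\big((\chi(v))_{v\in A}\big)/L$. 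Then $p(\emptyset)=0$, $p$ is monotone and submodular (standard entropy inequalities), and $p(V_G)=1$ since $\chi$ is uniform on $\hom(G,T)$ values; and $p$ is $G$-independent because $\chi$ is a Markov random field with respect to $G$ (the homomorphism constraints are edge-local, so a separator of $G$ renders the two sides conditionally independent) --- I will quote this and the previous facts from the Markov-random-field lemmas of Section~\ref{sec:MRF}. Thus $p\in\mc P(G)$, so $c\le\max_{\varphi\in\Hom(F,G)}\sum_{S\subseteq\MaxCliques(F)}-(-1)^{|S|}p\big(\varphi({\ts\bigcap}S)\big)$; fix $\varphi\in\Hom(F,G)$ attaining this maximum and call the maximum $c'$ ($\ge c$).

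Next I would build an auxiliary distribution supported on $\Hom(F,T)$. Fix an elimination ordering $u_1,\dots,u_m$ of $F$ and let $N_i$ be the set of neighbours of $u_i$ among $u_1,\dots,u_{i-1}$; by the elimination property each $N_i$ is a clique of $F$, and every clique of $F|_{\{u_1,\dots,u_i\}}$ containing $u_i$ has its remaining vertices inside $N_i$. Build a random map $\psi\colon V_F\to V_T$ by processing $u_1,\dots,u_m$ in order: having chosen $\psi(u_1),\dots,\psi(u_{i-1})$, draw $\psi(u_i)$ from the conditional law of $\chi(\varphi(u_i))$ given $(\chi(\varphi(u)))_{u\in N_i}=(\psi(u))_{u\in N_i}$, using fresh randomness, so that $\psi(u_i)$ is conditionally independent of $\psi(u_1),\dots,\psi(u_{i-1})$ given $(\psi(u))_{u\in N_i}$. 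For any edge $(u_j,u_i)$ of $F$ with $j<i$ we have $u_j\in N_i$ and $(\varphi(u_j),\varphi(u_i))\in E_G$, and in the support of the conditional law above there is a genuine $\chi_0\in\Hom(G,T)$ with $\chi_0(\varphi(u))=\psi(u)$ for all $u\in N_i\cup\{u_i\}$; hence $(\psi(u_j),\psi(u_i))=(\chi_0(\varphi(u_j)),\chi_0(\varphi(u_i)))\in E_T$. So $\psi$ always lies in $\Hom(F,T)$, and therefore $\log\hom(F,T)\ge\Ent(\psi)$.

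It remains to identify $\Ent(\psi)$ with $c'L$. The engine is an induction on $i$ showing that for every clique $C$ of $F|_{\{u_1,\dots,u_i\}}$ the joint law of $(\psi(u))_{u\in C}$ coincides with that of $(\chi(\varphi(u)))_{u\in C}$: when $u_i\notin C$ this is immediate from the inductive hypothesis; when $u_i\in C$, write $C=C'\cup\{u_i\}$ with $C'\subseteq N_i$, apply the inductive hypothesis to the clique $N_i$ to conclude that $(\psi(u))_{u\in N_i}$ already has the law of $(\chi(\varphi(u)))_{u\in N_i}$, and then marginalise the construction of $\psi(u_i)$ over $N_i\setminus C'$ to recover the conditional law of $\chi(\varphi(u_i))$ given $(\chi(\varphi(u)))_{u\in C'}$. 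I expect this distribution-matching step to be the main obstacle: the marginalisation has to be carried out carefully, and the point that makes it work is exactly that the elimination ordering forces $N_i$ to be a clique, so that the inductive hypothesis is available for $N_i$. Granting this, $\Ent\big((\psi(u))_{u\in C}\big)=\Ent\big(\chi|_{\varphi(C)}\big)=L\cdot p(\varphi(C))$ for every clique $C$ of $F$ (the two tuples determine each other). Since $\psi$ factorises over the cliques $N_i\cup\{u_i\}$ it is a Markov random field with respect to $F$, so the function $q(A):=\Ent\big((\psi(u))_{u\in A}\big)$ is $F$-polymatroidal; applying Lemma~\ref{la:entropy-of-chordal-MRF} to $q$ and the ordering $u_1,\dots,u_m$, and using that each set ${\ts\bigcap}S$ with $S\subseteq\MaxCliques(F)$ is a clique of $F$ on which $q=L\cdot(p\circ\varphi)$, we get
\[
\Ent(\psi)=q(V_F)=\sum_{S\subseteq\MaxCliques(F)}-(-1)^{|S|}\,q({\ts\bigcap}S)=L\sum_{S\subseteq\MaxCliques(F)}-(-1)^{|S|}\,p\big(\varphi({\ts\bigcap}S)\big)=c'L.
\]
Combining the two displays gives $\log\hom(F,T)\ge\Ent(\psi)=c'L\ge cL$, as desired.
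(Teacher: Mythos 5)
Your proposal is correct and follows essentially the same route as the paper: take $\chi$ uniform on $\Hom(G,T)$, observe its normalized entropy function lies in $\mc P(G)$, pull it back along each $\varphi \in \Hom(F,G)$ to a Markov random field supported on $\Hom(F,T)$, and evaluate its entropy via the chordal identity of Lemma~\ref{la:entropy-of-chordal-MRF}. The only difference is that you inline the construction and clique-marginal-matching argument that the paper delegates to the pullback lemma (Lemma~\ref{la:pullback}), and your treatment of that step is sound.
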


Theorem~\ref{thm:lower} is proved by a generalization of the entropy
technique illustrated by the example in \S\ref{sub:method}. 

Our second theorem gives an upper bound on $\hde(F,G)$ for general
graphs $F$ and $G$.

\begin{thm}\label{thm:upper}
For all graphs $F$ and $G$,
\begin{align*}
\ds  \hde(F,G) \le
  \min_{q \in \mc Q(G)}
  \max_{\varphi \in \Hom(F,G)}
  \sum_{A\subseteq V_G} q(A)\cdot \components(F|_{\varphi^{-1}(A)})
\end{align*}
\end{thm}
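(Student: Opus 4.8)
The plan is to show that for every $q \in \mc Q(G)$, there is a sequence of target graphs $T$ witnessing
$$\hde(F,G) \le \max_{\varphi \in \Hom(F,G)} \sum_{A \subseteq V_G} q(A)\cdot \components(F|_{\varphi^{-1}(A)}),$$
and then take the infimum over $q$. The natural candidates are blow-up-style targets generalizing the graphs $T_{3,n}$ used earlier for the $\alpha(F)/\alpha(G)$ bound. Specifically, given $q \in \mc Q(G)$, I would first reduce to rational $q$ (by density of $\mc Q(G)\cap\mathbb Q^{\wp(V_G)}$ and continuity of the right-hand side in $q$), clear denominators, and build a graph $T = T_q$ on a vertex set that is partitioned into independent blocks indexed in a way that records, for each $A \subseteq V_G$, roughly $n^{q(A)}$ "coordinates" associated to $A$. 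The edges of $T_q$ should be designed so that a homomorphism $G \to T_q$ essentially amounts to choosing, for each vertex $v \in V_G$, a coordinate from the blocks attached to subsets $A \ni v$, with edges of $G$ forcing endpoints to agree appropriately — so that $\log\hom(G,T_q) \sim \big(\sum_A q(A)\big)\cdot \log n$ up to lower-order terms, while for $F$ the count factors over connected components of the preimages $F|_{\varphi^{-1}(A)}$, giving $\log\hom(F,T_q) \sim \max_\varphi \sum_A q(A)\,\components(F|_{\varphi^{-1}(A)})\cdot\log n$. The normalization $\sum_A q(A)\,\components(G|_A) = 1$ in the definition of $\mc Q(G)$ is exactly what makes the $G$-count asymptotically $n$ (since $\components(G|_A)$ counts the "free" choices contributed by block $A$ when restricted along a homomorphism from $G$), so the ratio $\log\hom(F,T_q)/\log\hom(G,T_q)$ tends to the claimed maximum.

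Concretely I would take $T_q$ to be a disjoint-union/product hybrid: for each $A\subseteq V_G$ let $H_A$ be a graph on $[n^{q(A)}]$ that is an independent set made "$G|_A$-like" — more precisely, a graph $T$ whose homomorphisms from any graph $H$ number $\Theta(n^{q(A)\cdot\components(H)})$ when $H$ maps into the relevant component structure and $0$ otherwise; the earlier $T_{3,n}$ construction already realizes $\hom(H,T_{3,n}) = \Theta(n^{\alpha(H)})$, and I want the analogue keyed to connected components rather than independence number, which is even simpler (a single independent set of size $N$ has $\hom(H,\cdot) = N^{\components(H)}$ when $H$ has no edges, and one can attach a universal vertex per component to handle edges). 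Then I assemble $T_q$ from the $H_A$ using categorical products indexed by $A$, exploiting the identity $\hom(H, G_1\times G_2) = \hom(H,G_1)\hom(H,G_2)$ so that $\hom(H,T_q)$ multiplies over $A$ and the exponent of $n$ becomes $\sum_A q(A)\cdot(\text{component count of the appropriate restriction of }H)$. For $H = G$ the restriction of $G$ relevant to block $A$ has exactly $\components(G|_A)$ components, yielding exponent $1$; for $H = F$ the relevant restriction, after a homomorphism $\varphi$ has been "guessed", is $F|_{\varphi^{-1}(A)}$, yielding exponent $\max_\varphi\sum_A q(A)\,\components(F|_{\varphi^{-1}(A)})$ after summing over the finitely many choices of $\varphi$ (a sum of $\Theta(n^{c_\varphi})$ terms is $\Theta(n^{\max_\varphi c_\varphi})$).

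The main obstacle is getting the construction of $T_q$ right so that $\hom(G,T_q)$ has the exact exponent $1$ (matching the normalization constraint) while simultaneously ensuring $\hom(F,T_q)$ decomposes cleanly along connected components of the preimages — in particular, handling the edges of $F$ and $G$ (not just the vertex/component data) and making sure no extra homomorphisms appear that would inflate $\hom(F,T_q)$ beyond the claimed exponent. This is a bookkeeping-heavy step: one must verify that a homomorphism $F \to T_q$ is determined precisely by (i) a homomorphism $\varphi: F \to G$ recording which block-family each vertex of $F$ uses, plus (ii) an independent choice of coordinate for each connected component of each $F|_{\varphi^{-1}(A)}$, with edges imposing no further constraints beyond forcing agreement within components. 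Once the counting identity for $T_q$ is established, taking logarithms, dividing, letting $n\to\infty$, and then taking the infimum over rational $q$ (and extending to all $q\in\mc Q(G)$ by continuity and compactness of $\mc Q(G)$) completes the proof via the dual formula \eqref{hdeotherdef}.
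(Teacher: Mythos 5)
Your overall strategy matches the paper's: for each $q\in\mc Q(G)$ build explicit blow-up targets $T_{q,n}$ carrying roughly $n^{q(A)}$ coordinates for each $A\subseteq V_G$, show that the homomorphisms $F\to T_{q,n}$ lying over a fixed $\varphi\in\Hom(F,G)$ number about $\prod_A n^{q(A)\cdot\components(F|_{\varphi^{-1}(A)})}$, use the normalization $\sum_A q(A)\,\components(G|_A)=1$ together with the identity homomorphism to get $\log_n\hom(G,T_{q,n})\ge 1$, and conclude via \eqref{hdeotherdef}. Your first paragraph describes the right target informally, and you correctly identify the key verification.

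However, the one concrete assembly you propose --- taking $T_q$ to be a \emph{categorical} product $\prod_A H_A$ and invoking $\hom(H,G_1\times G_2)=\hom(H,G_1)\hom(H,G_2)$ --- does not prove the theorem. That identity gives $\hom(F,T_q)=\prod_A\hom(F,H_A)$, so the maximizing homomorphism into $G$ is chosen \emph{independently for each $A$}, and the resulting exponent is $\sum_A q(A)\max_\varphi\components(F|_{\varphi^{-1}(A)})$, which can strictly exceed $\max_\varphi\sum_A q(A)\,\components(F|_{\varphi^{-1}(A)})$; you would only obtain a weaker upper bound. What is needed is a \emph{fibered} product over $G$, which couples all the $A$'s through a single $\varphi$: as in the paper, take $V_{T_n}=\{(x,i): x\in V_G,\ i:\{A\subseteq V_G: x\in A\}\to\N,\ i(A)<n^{q(A)}\}$, with an edge from $(x,i)$ to $(y,j)$ iff $(x,y)\in E_G$ and $i(A)=j(A)$ for every $A\supseteq\{x,y\}$. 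The projection $\pi_n(x,i)=x$ forces each $\psi\in\Hom(F,T_n)$ to determine one $\varphi=\pi_n\circ\psi$, and for that fixed $\varphi$ the agreement constraints propagate along paths, so $i_u(A)$ is constant on each connected component of $F|_{\varphi^{-1}(A)}$ and otherwise free --- this is precisely the ``bookkeeping'' step you deferred, and it is where the component count enters. With that construction the rest of your outline (a sum of $\Theta(n^{c_\varphi})$ over finitely many $\varphi$ is $\Theta(n^{\max_\varphi c_\varphi})$, the normalization gives $\log_n\hom(G,T_n)\ge 1$, then minimize over $q$; no rationality reduction is needed if one simply rounds $n^{q(A)}$) goes through as in the paper.
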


The next theorem establishes that Theorem~\ref{thm:lower} is tight
in the special case where $G$ is series-parallel. 

\begin{thm}\label{thm:tight}
If $F$ is chordal and $G$ is series-parallel, then 
\begin{align*}
  \hde(F,G) &= \min_{p \in \mc P(G)} \max_{\varphi \in
  \Hom(F,G)} \sum_{S \subseteq
  \MaxCliques(F)} -(-1)^{|S|} \cdot p(\varphi({\ts\bigcap} S)).
\end{align*}
\end{thm}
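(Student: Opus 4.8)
In light of Theorem~\ref{thm:lower}, which already gives the ``$\ge$'' direction of the claimed equality, the work is entirely in proving the matching upper bound
\[
  \hde(F,G) \le \min_{p \in \mc P(G)} \max_{\varphi \in \Hom(F,G)} \sum_{S \subseteq \MaxCliques(F)} -(-1)^{|S|}\cdot p\big(\varphi({\ts\bigcap}S)\big)
\]
when $F$ is chordal and $G$ is series-parallel. The natural strategy is to fix a minimizing $p \in \mc P(G)$ and construct, for every $\eps > 0$, an explicit sequence of target graphs $T = T_{p,N}$ (or a single parametrized family) for which $\log\hom(F,T)/\log\hom(G,T)$ converges to the value of the program, then invoke the dual formula \eqref{hdeotherdef}. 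Since $p$ is a normalized $G$-polymatroidal function and $G$ is series-parallel (hence a subgraph of a $2$-tree, so itself essentially chordal-like), the first step is to realize $p$ — or an arbitrarily close rational approximation — as the ``normalized entropy profile'' of a Markov random field on $G$. Concretely, I would build a probability distribution $\mu$ on $V_G$-indexed tuples whose marginal-entropy function $A \mapsto \Ent(\text{coordinates in }A)/\log|\Omega|$ equals (approximately) $p(A)$; the $G$-independence and submodularity conditions are exactly what make such a realization possible for series-parallel $G$, because the entropy function of a Markov random field supported on a chordal (or $2$-tree) skeleton satisfies precisely the conditional-independence identities in Definition~\ref{df:G-polymat}. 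This realization step is where the tools in Section~\ref{sec:MRF} (referenced but not shown in the excerpt) should do the heavy lifting, and it is here that series-parallel is used in an essential way — the analogous statement fails for general $G$, as the $f_{\mr{RS}}$ function in Example~\ref{ex:K_3} is designed to illustrate.

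Granting the realization, the second step is to turn the distribution $\mu$ on $V_G^{(V_G)}$ into an actual target graph $T$. The standard construction: take $T$ to be a ``tensor-power / typical-sequence'' graph built from $\mu^{\otimes N}$, so that $\hom(G,T)$ counts (up to subexponential factors) the $\mu^{\otimes N}$-typical tuples, giving $\log \hom(G,T) = N\cdot\Ent(\mu) \cdot(1+o(1)) = N\log|\Omega|\cdot p(V_G)\cdot(1+o(1))$. The point of a $G$-shaped MRF is that a homomorphism $G \to T$ is forced, edge by edge, to respect the conditional structure, so the count of homomorphisms from $G$ is governed by the \emph{joint} entropy $\Ent(\mu)$, while the count of homomorphisms from the chordal graph $F$ — evaluated via the elimination-ordering identity of Lemma~\ref{la:entropy-of-chordal-MRF} — decomposes as a sum of conditional entropies along an elimination ordering of $F$, each term of which, after choosing the optimal way to route $F$ into the MRF structure, is controlled by $p$ pulled back along some $\varphi \in \Hom(F,G)$. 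Taking the best such $\varphi$ (the ``$\max_\varphi$'' in the statement) and bounding $\hom(F,T)$ from \emph{above} by $\exp\big(N\log|\Omega|\cdot\max_\varphi \sum_S -(-1)^{|S|} p(\varphi(\bigcap S)) \cdot(1+o(1))\big)$ then yields $\log\hom(F,T)/\log\hom(G,T) \le \max_\varphi(\cdots) + o(1)$, and letting $N\to\infty$ (and the rational approximation of $p$ tighten) closes the gap.

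\textbf{Main obstacle.} The crux — and the part I expect to be genuinely delicate — is the \emph{upper} bound $\hom(F,T) \le \exp\big(N\log|\Omega|\cdot\max_\varphi \sum_S -(-1)^{|S|}p(\varphi(\bigcap S))+o(N)\big)$ on the $F$-homomorphism count into the constructed target. Lower-bounding $\hom(F,T)$ by a single good $\varphi$ is easy (one just plants $F$ into $T$ via $\varphi$ and the typical-set structure), but showing that \emph{no} homomorphism strategy does asymptotically better than the best $\varphi$ requires a careful inductive argument along the elimination ordering of $F$: at each elimination step one conditions on the images of the already-placed clique of neighbors and uses that the MRF on $T$ (being modeled on the series-parallel $G$) has bounded ``branching entropy'' from any clique, namely exactly $p(N(v_i)\cup\{v_i\}) - p(N(v_i))$ pulled back along whatever partial map has been committed to — and one must argue the committed partial map extends to a genuine homomorphism $\varphi \in \Hom(F,G)$, or can be taken to without loss. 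Handling the interaction between ``which $\varphi$'' is chosen and the error terms accumulating over $n = |V_F|$ elimination steps, uniformly as $N \to \infty$, while keeping the $o(N)$ slack from the typical-sequence construction under control, is the technical heart of the proof. I would expect Section~\ref{sec:tight} to isolate a clean lemma — roughly, ``for a $G$-modeled MRF target $T$ and chordal $F$, $\log\hom(F,T) \le \max_{\varphi\in\Hom(F,G)}\langle\text{entropy profile of }T,\text{ pull-back of the elimination functional}\rangle + o(|V_T|$-size$)$'' — whose proof is the elimination-ordering induction sketched above, and then deduce Theorem~\ref{thm:tight} by combining it with the MRF realization of the optimal $p$ and Theorem~\ref{thm:lower}.
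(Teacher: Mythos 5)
There is a genuine gap, and it sits exactly where you flag the ``main obstacle'': the passage from a $G$-polymatroidal $p$ to an actual target graph $T$ whose homomorphism counts realize $p$. Your plan is to realize $p$ as the entropy profile of a Markov random field and then take a typical-sequence graph built from $\mu^{\otimes N}$. The trouble is that a graph only records \emph{pairwise} constraints: in the typical-set graph, $\hom(K_3,T)$ counts triples of sequences that are pairwise jointly typical, which can vastly exceed the number of jointly typical triples. The extreme point $f_{\mr{RS}}$ of $\mc P(K_3)$ (Example~\ref{ex:K_3}) makes this concrete: it \emph{is} entropic (take two independent uniform bits and their XOR --- your parenthetical that $f_{\mr{RS}}$ witnesses a failure of entropic realizability is backwards; the paper notes $\mc E(G)=\mc P(G)$ for $|V_G|\le 3$), but its typical-set graph is essentially a complete tripartite graph with parts of size $2^{N/2}$, hence has $2^{3N/2}$ triangles where the profile demands only $2^{N(1+o(1))}$. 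That destroys the required upper bound on $\hom(F,T)$ whenever $F$ maps a triangle onto that clique of $G$. The paper's actual proof gets around this with a genuinely nontrivial extremal input: it decomposes $p$ restricted to each maximal clique of (a $2$-tree completion of) $G$ into the eight extreme points of $\mc P(K_3)$, realizes seven of them by complete-multipartite-style gadgets, and realizes $f_{\mr{RS}}$ by the Ruzsa--Szemer\'edi tripartite graphs (dense, yet every edge in exactly one triangle), combining the gadgets by fibered products. Nothing in your proposal supplies this ingredient, and no soft entropy/typicality argument will, since the statement being proved is precisely that the polymatroid bound is achieved by actual graphs.

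A second, smaller omission: even granting local gadgets $T_A$ over each maximal clique $A$ with the right counts, one must patch them into a single $T$ with a homomorphism $\pi:T\to G$ so that the counts of Claim-type quantities multiply correctly along an elimination ordering. The paper does this by a \emph{randomized} gluing (random bijections on shared vertices, random automorphisms of $\gamma\cdot K_{\alpha,\beta}$ on shared edges, keeping only common edges), relying on chordality of the $2$-tree completion $\wt G$ to carry out all pairwise gluings consistently, and only then deletes the edges of $\wt T_n$ lying over $E_{\wt G}\setminus E_G$ to handle general series-parallel $G$. Your sketch treats the target as a single global typical-sequence object and so never confronts this consistency issue, nor the reduction from series-parallel to chordal $G$. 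The elimination-ordering induction you describe for upper-bounding $\hom(F,T)$ is indeed the right shape for the final counting step (it is essentially Claim~\ref{claim:H}), but it only works once the target has been built from gadgets with the ``every edge in few triangles'' property that your construction lacks.
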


The final theorem (mentioned in the introduction) is an example of an interesting HDE computation discovered with the help of the linear program of Theorem~\ref{thm:tight}.

\begin{thm}\label{thm:P_4}
$\ds\hde(P_4,P_{4n+2}) = \frac{4n+1}{4n^2+3n+1}$
\end{thm}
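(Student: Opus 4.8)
The plan is to compute both sides of the inequality in Theorem~\ref{thm:tight} with $F = P_4$ and $G = P_{4n+2}$. Since $P_4$ is chordal (it is a tree) and $P_{4n+2}$ is series-parallel (every path is), Theorem~\ref{thm:tight} applies and tells us that $\hde(P_4,P_{4n+2})$ equals the value of the linear program
\[
  \min_{p \in \mc P(P_{4n+2})} \max_{\varphi \in \Hom(P_4,P_{4n+2})} \sum_{S \subseteq \MaxCliques(P_4)} -(-1)^{|S|}\cdot p(\varphi({\ts\bigcap}S)).
\]
Writing $P_4$ with vertices $u_1,u_2,u_3,u_4$ and edges $u_1u_2,u_2u_3,u_3u_4$, the maximal cliques of $P_4$ are exactly these three edges, and the inclusion–exclusion sum collapses (using Lemma~\ref{la:entropy-of-chordal-MRF} applied to $P_4$, or direct expansion) to
\[
  p(\{u_1,u_2\}) + p(\{u_2,u_3\}) + p(\{u_3,u_4\}) - p(\{u_2\}) - p(\{u_3\})
\]
evaluated at the images under $\varphi$; equivalently, for a homomorphism sending $u_i$ to a vertex $w_i$ of the path, this is $p(\{w_1,w_2\}) + p(\{w_2,w_3\}) + p(\{w_3,w_4\}) - p(\{w_2\}) - p(\{w_3\})$, where consecutive $w_i$ are adjacent in $P_{4n+2}$. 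So the objective only depends on $p$ restricted to singletons and to pairs of adjacent vertices.

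For the \emph{lower bound} (i.e.\ the $\le$ direction giving $\hde \ge \frac{4n+1}{4n^2+3n+1}$ — wait, more precisely: to show the LP value is at least this, we must show every $p \in \mc P(P_{4n+2})$ admits a homomorphism $\varphi$ with large objective), I would argue as follows. Label the vertices of $P_{4n+2}$ as $0,1,\dots,4n+1$. Given $p$, consider the quantities $s_j = p(\{j\})$ and $e_j = p(\{j,j{+}1\})$ for each edge. Submodularity and $P_{4n+2}$-independence give $e_j = s_j + s_{j+1}$ when the pair is "split" appropriately — in fact for a path, the $G$-independence condition forces $p$ on any independent-in-the-complement-sense configuration to be additive, which should pin down $p$ on non-adjacent vertices entirely in terms of an additive/modular part, leaving genuine freedom only in the "defect" $\delta_j = s_j + s_{j+1} - e_j \ge 0$ on each edge and in a submodular profile on the singletons. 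The objective for the homomorphism folding $P_4$ onto the sub-path $w_1,w_2,w_3,w_4$ (a length-3 walk) becomes, after substituting $e = s + s - \delta$, something like $s_{w_1} + s_{w_4} + (s_{w_2} - \text{something}) - \delta_{\cdot}$; one then averages this quantity over a well-chosen distribution of length-3 walks (including the "back-and-forth" walks $w_1 = w_3$, $w_2 = w_4$, which are legal homomorphisms of $P_4$ since $P_4 \to P_{4n+2}$ need not be injective) to extract $\ge \frac{4n+1}{4n^2+3n+1}$ from the normalization $p(V_G) = 1$ and the telescoping structure along the path. The combinatorial heart is choosing the right averaging weights on walks so that the $\delta_j$ and the submodular slack all get absorbed; I expect the optimal dual object to be a carefully weighted mixture of the "identity-like" homomorphism and short back-and-forth homomorphisms, and the weights $4n+1$ and $4n^2+3n+1$ will emerge from summing an arithmetic-type series over the $4n+2$ vertices.

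For the \emph{upper bound} ($\hde \le \frac{4n+1}{4n^2+3n+1}$), by \eqref{hdeotherdef} it suffices to exhibit a single target graph $T$, or a sequence $T_k$, with $\log\hom(P_4,T_k)/\log\hom(P_{4n+2},T_k) \to \frac{4n+1}{4n^2+3n+1}$; dually, by the tightness half of Theorem~\ref{thm:tight}, it suffices to exhibit \emph{one} explicit $G$-polymatroidal function $p^\star \in \mc P(P_{4n+2})$ for which $\max_\varphi$ of the objective equals $\frac{4n+1}{4n^2+3n+1}$. I would construct $p^\star$ directly: guided by the $n=1$ and $n=2$ cases already solved by the linear program (which the authors say suggested the pattern), set $p^\star(\{j\})$ to be a piecewise-linear "tent" function of $j$ — roughly $p^\star(\{j\}) \approx \min(j, 4n{+}1{-}j)/c$ for a normalizing constant, possibly with a small modification near the middle to handle the parity-$2$ phenomenon — extend it to all subsets by the formula making it exactly $G$-polymatroidal (the maximal-clique inclusion–exclusion form of Lemma~\ref{la:entropy-of-chordal-MRF} shows how $p^\star$ on larger sets is determined by its values on edges and vertices), and then verify that for \emph{every} length-3 walk $w_1w_2w_3w_4$ in $P_{4n+2}$ the quantity $p^\star(\{w_1,w_2\}) + p^\star(\{w_2,w_3\}) + p^\star(\{w_3,w_4\}) - p^\star(\{w_2\}) - p^\star(\{w_3\})$ is $\le \frac{4n+1}{4n^2+3n+1}$, with equality attained. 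This verification splits into a few cases by how the walk sits relative to the endpoints and the apex of the tent (monotone stretch, walk that turns around once, walk straddling the middle), and it is the routine-but-delicate computational core.

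The main obstacle, I expect, is not either direction in isolation but \emph{guessing the optimal $p^\star$ (equivalently the optimal averaging weights) with the exact constant}. The appearance of a quadratic-in-$n$ denominator $4n^2+3n+1$ — as opposed to the clean $m/n$ one gets for odd $m$ — signals that the extremal $p^\star$ is not simply the uniform tent but has a second-order correction whose size is tuned precisely so that two different families of length-3 walks (say, the maximal monotone walk and the walk that reaches the apex and turns back) achieve the same objective value; finding where that balance point lies is the crux. Once $p^\star$ is correctly identified, both the $\max_\varphi \le \frac{4n+1}{4n^2+3n+1}$ check and the matching dual averaging argument should be finite case analyses. I would begin by re-deriving the $n=1$ solution $\hde(P_4,P_6) = 5/9$ by hand to read off the shape of $p^\star$, then posit the $n$-parametrized family and confirm it against $n=2$ ($\hde(P_4,P_{10}) = 9/25$) before attempting the general verification.
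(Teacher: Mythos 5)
Your framework is correct and is essentially the primal--dual pair the paper itself uses, just phrased through the linear program of Theorem~\ref{thm:tight}: your reduction of the objective to $p(\{w_1,w_2\})+p(\{w_2,w_3\})+p(\{w_3,w_4\})-p(\{w_2\})-p(\{w_3\})$ over length-$3$ walks is right; your proposed upper bound via an explicit $p^\star\in\mc P(P_{4n+2})$ corresponds to the paper's explicit random target graph $T_N$ (whose vertex classes have sizes $N^{f(v)}$, i.e.\ $f$ \emph{is} the extremal profile, and the paper then just applies~\eqref{hdeotherdef} directly rather than citing Theorem~\ref{thm:tight}); and your proposed ``averaging over a well-chosen distribution of length-$3$ walks'' for the lower bound is exactly the paper's weight function $w$ on $\Hom(P_4,P_{4n+2})$ fed into the pullback/entropy machinery, since the only inequalities used there are monotonicity and submodularity, i.e.\ the polymatroidal axioms.

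The genuine gap is that you exhibit neither extremal object, and these two explicit constructions are the entire content of the theorem (the paper itself remarks the proof ``seems like it might be hard to come up with by hand''). Worse, your one concrete structural guess --- that $p^\star(\{j\})$ is a tent $\approx\min(j,4n{+}1{-}j)/c$, up to a small correction near the apex --- is qualitatively wrong and the verification you defer would fail: normalizing the tent forces $c=\Theta(n)$, and the monotone walk ending at the apex then has objective $\approx p^\star(\{2n\})\approx 1$, vastly exceeding the target $(4n+1)/(4n^2+3n+1)=\Theta(1/n)$. The true extremal profile is not monotone up to a peak but \emph{oscillates} on consecutive vertices: in the paper's construction $f(4k+1)=f(4k+3)=2k+1$ while $f(4k+2)=f(4k+4)=2n-2k-1$ (with $f(0)=f(4n+1)=2n+1$), so adjacent singletons alternate between small and large values, and the edge values sit at $\max$ (full dependence) on some edges and at the sum (independence) on others in a pattern of period $4$. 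Correspondingly, the dual weights are supported on back-and-forth walks $\langle 4k,4k{+}1,4k,4k{+}1\rangle$ and ``turning'' walks with weight $1$, plus monotone walks $\langle 4k{+}2,\dots,4k{+}5\rangle$ and their mirrors with weight $4k+2$, summing to $4n^2+3n+1$. Without these data your proposal is a plan for a proof rather than a proof, and the heuristic you would have followed to find them points in the wrong direction.
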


Theorems~\ref{thm:lower}, \ref{thm:upper}, \ref{thm:tight} and \ref{thm:P_4} are respectively
proved in Sections~\ref{sec:lb}, \ref{sec:ub}, \ref{sec:tight} and \ref{sec:P_4}.

\subsection*{Discussion 1. Tightness of our lower and upper bounds}

The HDE upper bound of Theorem~\ref{thm:upper} is not tight for all pairs of graphs. For instance, $F = C_4 + 2{\cdot}K_1$ (an undirected 4-cycle plus two isolated vertices) and $G = K_2$, it holds that $\hde(F,G) = 8/3$, while Theorem~\ref{thm:upper} only implies $\hde(F,G) \le 3$. However, we can show that Theorem~\ref{thm:upper} is tight when (the underlying simple graphs of) $F$ and $G$ are forests.

We do not have any example of a chordal graph $F$ and a graph $G$ for which the HDE lower bound of Theorem~\ref{thm:lower} is not tight. However, there are reasons to believe that the tightness of this lower bound is not the question. Recall that the linear program in Theorem~\ref{thm:lower} has domain $\mc P(G)$, the set of normalized $G$-polymatroidal functions. In fact (as will obvious from the proof of Theorem~\ref{thm:lower}), we can replace $\mc P(G)$ with the subset $\{h_X : X \in \MRF(G)\}$ of normalized entropic functions of Markov random fields over $G$ (defined in the next section). Let $\mc E(G)$ denote the closure of $\{h_X : X \in \MRF(G)\}$ in $\R^{V_G}$. The set $\mc E(G)$, whose members are called {\em $G$-entropic functions},
is a convex subset of $\mc P(G)$ and a well-studied object in information theory.  
When $|V_G| \le 3$, we have $\mc E(G) = \mc P(G)$. However, these sets do not coincide in general. For instance, $\mc E(K_4)$ is a proper subset of $\mc P(K_4)$ (due to the existence of ``non-Shannon information inequalities'' on $4$ random variables); in fact, $\mc E(\mc K_4)$ fails even to be a polytope. While it seems unnatural to conjecture that the HDE lower bound of Theorem~\ref{thm:lower} is tight as stated, the same conjecture for the corresponding linear program over $\mc E(G)$ would appear more reasonable.

\subsection*{Discussion 2. Theorem~\ref{thm:upper}  
is a linear program relaxation 
of Theorem~\ref{thm:lower}} 

It is worth pointing out that the linear program 
in the HDE upper bound of Theorem~\ref{thm:upper} is (after a linear change of variables) a direct relaxation of the linear program 
in the HDE lower bound of Theorem~\ref{thm:lower}. To see this,
consider the invertible linear transformation $L : \R^{\wp(V_G)} \lra \R^{\wp(V_G)}$ which takes a function $f : \wp(V_G) \lra \R$ to a function $Lf : \wp(V_G) \lra \R$ defined by
\[
		(Lf)(A) = \sum_{B 
		\,:\, A \cup B = V_G} -(-1)^{|A \cap B|} f(B).
\]
We need a combinatorial lemma on chordal graphs. 

\begin{la}\label{la:chordal-graph-identity}
Suppose $F$ is chordal.
\begin{enumerate}[\normalfont\hspace{\parindent}(a)]\setlength{\itemsep}{0pt}
\item
For all $A \subseteq V_F$,
\[
	\sum_{S \subseteq \MaxCliques(F)} (-1)^{|S|} = \sum_{B
	\,:\, A \cup B = V_F} (-1)^{|A \cap B|} \components(F|_B).
\]
\item
For every function $f : \wp(V_F) \lra \R$,
\[
	 \sum_{S \subseteq \MaxCliques(F)} -(-1)^{|S|} f({\ts\bigcap} S) = \sum_{A \subseteq V_F} (Lf)(A) \cdot \components(F|_A).
\]
\item
For every homomorphism $\varphi : F \lra G$ and function $g : \wp(V_G) \lra \R$,
\[
	 \sum_{S \subseteq \MaxCliques(F)} -(-1)^{|S|} g(\varphi({\ts\bigcap} S)) = \sum_{A \subseteq V_G} (Lg)(A) \cdot 	\components(F|_{\varphi^{-1}(A)}).
\]
\end{enumerate}
\end{la}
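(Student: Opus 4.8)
The plan is to prove the three parts in order, deriving (b) from (a) and then (c) from (b) by a purely formal manipulation, so that (a) carries all the combinatorial content. For part (a), I would expand the right-hand side using the standard identity $\components(F|_B) = \sum_{\emptyset \neq C \subseteq V_F,\ C \text{ connected in } F|_B} \mu_C$ for suitable Möbius-type coefficients — or, more cleanly, use the formula $\components(H) = \sum_{S \subseteq \MaxCliques(H)} -(-1)^{|S|}[\bigcap S \neq \emptyset]$ style inclusion-exclusion that is implicit in Lemma \ref{la:entropy-of-chordal-MRF}. Concretely, apply Lemma \ref{la:entropy-of-chordal-MRF} (or rather its proof) to the $F$-polymatroidal function $p = \mathds{1}[\,\cdot \neq \emptyset\,]$ restricted appropriately; since $\components(F|_B)$ counts connected components, it equals $\sum_{\emptyset \neq D \subseteq B}(\text{coeff})$, and one reorganizes the double sum over $B \supseteq V_F \setminus A$ and over $D$. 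The key combinatorial fact is that for a chordal graph, the ``connected-component generating function'' has a clean inclusion–exclusion expansion over \emph{cliques} (this is exactly the content of the maximal-cliques formula in Lemma \ref{la:entropy-of-chordal-MRF}), and one matches the alternating sum $\sum_{S \subseteq \MaxCliques(F)}(-1)^{|S|}$ on the left to the coefficient of the ``full'' contribution on the right.

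For part (b), observe that both sides are linear in $f$, so it suffices to check it on the basis $f = \delta_{B_0}$ (the indicator of a single set $B_0 \subseteq V_F$). The left-hand side becomes $-\sum_{S \,:\, \bigcap S = B_0}(-1)^{|S|}$. The right-hand side becomes $\sum_{A}(L\delta_{B_0})(A)\components(F|_A) = \sum_{A \,:\, A \cup B_0 = V_F} -(-1)^{|A \cap B_0|}\components(F|_A)$. Setting $B = B_0$ in part (a) and then re-indexing (noting $\bigcap S = B_0$ ranges exactly over subsets $S$ of maximal cliques whose intersection is $B_0$, and that the left side of (a) with the choice $A \leftrightarrow B_0$ is constant in the relevant index set) gives the match. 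I should be a little careful here: part (a) as stated sums over all $S \subseteq \MaxCliques(F)$ without the constraint $\bigcap S = A$, so the cleanest route is actually to first prove the refined statement $\sum_{S \,:\, \bigcap S = B_0} -(-1)^{|S|} = \sum_{A \,:\, A \cup B_0 = V_F} -(-1)^{|A \cap B_0|}\components(F|_A)$ for each $B_0$, sum over $B_0$ to recover (a), and use the refined statement directly for (b).

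For part (c), this is immediate from (b) by substituting $f(A) = g(\varphi(A))$: the left-hand side of (b) for this $f$ is exactly the left-hand side of (c), so it remains only to identify $(Lf)(A) \cdot \components(F|_A)$ summed over $A \subseteq V_F$ with $(Lg)(A') \cdot \components(F|_{\varphi^{-1}(A')})$ summed over $A' \subseteq V_G$. Here I would group the subsets $A \subseteq V_F$ according to the partition induced by $\varphi$, but more simply: expand $(Lf)(A) = \sum_{B:\,A\cup B = V_F} -(-1)^{|A\cap B|} g(\varphi(B))$ and push the sum over $A$ inside — the combinatorial core is that for fixed image sets this collapses to the $L$-transform of $g$ evaluated on $\varphi$-images, with $\components(F|_{\varphi^{-1}(A')})$ appearing as the multiplicity. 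I expect the main obstacle to be part (a): getting the chordal inclusion–exclusion for $\components$ over maximal cliques stated and proved in exactly the form needed, with the $A \cup B = V_F$ constraint and the sign $(-1)^{|A \cap B|}$ lining up correctly. Everything after that is bookkeeping, and the $L$-transform being invertible (stated in the excerpt) guarantees the change of variables in Discussion 2 is legitimate, but that invertibility is not actually needed for the lemma itself.
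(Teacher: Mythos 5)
The paper does not actually supply a proof of this lemma (it records only that it ``can be proved by an inductive argument, or alternatively, using elementary algebraic topology,'' with (a) as the essential identity), so only the architecture can be compared, and there your plan --- prove (a), deduce (b) by linearity, extend to (c) --- agrees with the paper's stated plan. Your observation that (a) as printed cannot be literally correct is a genuine catch: the left side $\sum_{S\subseteq\MaxCliques(F)}(-1)^{|S|}$ is identically $0$ (since $\MaxCliques(F)\neq\emptyset$), while the right side equals $-1$ already for $F=K_1$ and $A=V_F$; the intended statement is your refined one, $\sum_{S\,:\,\bigcap S=A}(-1)^{|S|}=\sum_{B\,:\,A\cup B=V_F}(-1)^{|A\cap B|}\components(F|_B)$, and your reduction of (b) to that refined identity by testing on indicators $\delta_{B_0}$ is correct and complete.

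The gap is that the refined (a) --- which you yourself flag as carrying all the combinatorial content --- is never proved. The one concrete mechanism you propose, applying Lemma~\ref{la:entropy-of-chordal-MRF} to $p=\mathds{1}[\,\cdot\neq\emptyset\,]$, does not work: that function is not $F$-polymatroidal (for disjoint nonempty $A,B$ with no edges between them, the $F$-independence condition would force $0+1=1+1$), and Lemma~\ref{la:entropy-of-chordal-MRF} in any case only evaluates $p(V_F)$, not $\components(F|_B)$ for arbitrary $B$. What is actually needed is the chordal Euler-characteristic identity $\components(F|_B)=-\sum_{\emptyset\neq C\in\Cliques(F),\,C\subseteq B}(-1)^{|C|}$ (clique complexes of chordal graphs are disjoint unions of collapsible pieces); substituting it into the right side of refined (a) and summing out $B$ reduces the claim to $\sum_{S\,:\,\bigcap S=A}(-1)^{|S|}=-(-1)^{|A|}\sum_{\emptyset\neq C\in\Cliques(F),\,C\supseteq A}(-1)^{|C|}$, which still requires its own inductive (elimination-ordering) argument relating maximal cliques to all cliques. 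Neither of these two facts is stated precisely or proved in your proposal. Likewise, (c) is not ``immediate from (b) by substituting $f=g\circ\varphi$'': that substitution matches the left sides, but the identification $\sum_{A\subseteq V_F}(L(g\circ\varphi))(A)\,\components(F|_A)=\sum_{A'\subseteq V_G}(Lg)(A')\,\components(F|_{\varphi^{-1}(A')})$ is precisely the nontrivial point; testing on $g=\delta_D$ it amounts to $\sum_{S\,:\,\varphi(\bigcap S)=D}(-1)^{|S|}=\sum_{A'\,:\,A'\cup D=V_G}(-1)^{|A'\cap D|}\components(F|_{\varphi^{-1}(A')})$, which needs the clique-complex expansion again together with the cancellation $\sum_{A\subseteq C,\,\varphi(A)=D}(-1)^{|A|}=(-1)^{|D|}[D\subseteq\varphi(C)]$ for cliques $C$ of $F$. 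So the skeleton is right and consistent with the paper's outline, but both load-bearing identities are left unestablished.
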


Lemma~\ref{la:chordal-graph-identity} can be proved by an inductive argument, or alternatively, 
using elementary algebraic topology (Euler characteristics of flag complexes associated with chordal graphs). Statement (a) is the essential identity; statement (b) follows directed from (a); statement (c), which is the result we need, is a slight extension of (b).

As an immediate corollary of Lemma~\ref{la:chordal-graph-identity}(c), we get:
\begin{cor}[Alternative Statement of Theorem~\ref{thm:lower}]
If $F$ is chordal and $G$ is any graph, then
\begin{align*}
  \hde(F,G) &\ge \min_{q \in L (\mc P(G))} \max_{\varphi \in \Hom(F,G)}
  \sum_{A \subseteq V_G} q(A) \cdot \components(F|_{\varphi^{-1}(A)}).
\end{align*}
\end{cor}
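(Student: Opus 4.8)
The plan is to deduce this from Theorem~\ref{thm:lower} together with the change of variables $L$ and the chordal-graph identity of Lemma~\ref{la:chordal-graph-identity}(c); there is essentially no new content beyond those inputs. Theorem~\ref{thm:lower} already gives
\[
  \hde(F,G) \ge \min_{p \in \mc P(G)} \max_{\varphi \in \Hom(F,G)} \sum_{S \subseteq \MaxCliques(F)} -(-1)^{|S|} \cdot p(\varphi({\ts\bigcap} S)),
\]
so it suffices to check that the right-hand side equals $\ds\min_{q \in L(\mc P(G))} \max_{\varphi \in \Hom(F,G)} \sum_{A \subseteq V_G} q(A) \cdot \components(F|_{\varphi^{-1}(A)})$.

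First I would fix $p \in \mc P(G)$ and a homomorphism $\varphi : F \to G$ and invoke Lemma~\ref{la:chordal-graph-identity}(c) with $g := p$; this says precisely that $\sum_{S \subseteq \MaxCliques(F)} -(-1)^{|S|} p(\varphi({\ts\bigcap} S)) = \sum_{A \subseteq V_G} (Lp)(A) \cdot \components(F|_{\varphi^{-1}(A)})$. Since this equality is term-for-term in $\varphi$, taking the maximum over $\varphi \in \Hom(F,G)$ on both sides shows that the inner $\max$ in the expression from Theorem~\ref{thm:lower}, evaluated at $p$, coincides with the inner $\max$ of the new objective evaluated at $q = Lp$.

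Next I would perform the outer minimization. Because $L$ is invertible, $p \mapsto Lp$ is a bijection from $\mc P(G)$ onto its image $L(\mc P(G))$, so minimizing over $p \in \mc P(G)$ is the same as minimizing over $q \in L(\mc P(G))$, which yields the displayed inequality. The only routine point to record is that $\mc P(G)$ is a compact convex set and the objective is continuous, so the min is attained and the substitution is legitimate --- exactly the situation already in force for Theorem~\ref{thm:lower}.

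I do not expect any real obstacle here: as stated the corollary is a formal consequence of results assumed earlier. If one instead wanted the argument to be self-contained, the genuine work would be Lemma~\ref{la:chordal-graph-identity}(a) --- the Euler-characteristic identity $\sum_{S \subseteq \MaxCliques(F)} (-1)^{|S|} = \sum_{B \,:\, A \cup B = V_F} (-1)^{|A \cap B|}\components(F|_B)$ --- which I would prove by induction along an elimination ordering of $F$, checking that adjoining an eliminable vertex changes both sides consistently, and then obtain (b) and (c) by linearity and by pushing forward along $\varphi$.
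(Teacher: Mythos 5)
Your argument is correct and is exactly the paper's route: the corollary is stated there as an immediate consequence of Lemma~\ref{la:chordal-graph-identity}(c), obtained by substituting $q = Lp$ term-for-term in the objective and using invertibility of $L$ to reparametrize the outer minimum. The extra remarks on compactness and on how one would prove part (a) are fine but not needed beyond what the paper already assumes.
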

To see that the linear program of Theorem~\ref{thm:upper} is a direct relaxation of the linear program of Theorem~\ref{thm:lower}, it suffices to show that $\mc Q(G) \subseteq L(\mc P(G))$ for all graphs $G$, which can be checked by applying $L^{-1}$ to an arbitrary function in $\mc Q$ and seeing that the resulting function is normalized $G$-polymatroidal. Indeed, for any $q \in \mc Q(G)$, the function $L^{-1}q$ is given by $(L^{-1}q)(A) = 
\sum_{B \subseteq V_G} q(B) \cdot \components(G|_{\varphi^{-1}(A \cap B)})$,
which one can show is normalized $G$-polymatroidal.

\section{Chordal Pullbacks of Markov Random Fields}\label{sec:MRF}

A {\em (probability) distribution} over a nonempty finite set
$\Omega$ is a function $X : \Omega \lra [0,1]$ such that
$\sum_{\omega \in \Omega} X(\omega) = 1$. We denote by
$\ProbDist(X)$ the set of all distributions over $\Omega$. The {\em
support} of $X$ is the set $\Supp(X) = \{\omega \in \Omega :
X(\omega)> 0\}$. The {\em entropy} of $X$ is defined by $\H(X) =
\sum_{\omega \in \Omega} -X(\omega)\log X(\omega)$. Since the
uniform distribution maximizes entropy among all distributions with
a given support, it holds that $\H(X) \le \log|\Supp(X)|$.

For a finite set $I$, we refer to distributions $X \in
\ProbDist(\Omega^I)$ as called {\em $I$-indexed joint distribution
(with values in $\Omega$)}. We view the coordinates $X_i$ ($i \in
I$) as random variables taking values in $\Omega$. We speak of
{\em independence} and {\em conditional independence} among random
variables $X_i$. For all $J \subseteq I$, we denote by $X_J$ the
{\em marginal $J$-indexed joint distribution} $\sq{X_j : j \in J}$
viewed as a distribution in $\ProbDist(\Omega^J)$. 

For an $I$-indexed joint distribution $X$, we denote by $h_X : \wp(I) \lra [0,1]$ the {\em normalized entropy function of $X$} defined by $h_X(J) = \H(X_J)/\H(X)$. By Shannon's classical information inequalities (see \cite{Yeung}), the function $h_X$ is monotone and submodular.

For a graph $G$, a $V_G$-indexed joint distribution $X \in \ProbDist(\Omega^{V_G})$ is a {\em Markov random field over $G$} if $\H(X_A) + \H(X_B) = \H(X_{A \cup B}) + \H(X_{A \cap B})$ for all $A,B \subseteq V_G$ such that $A \cap B$ separates $A \setminus B$ and $B \setminus A$ in $G$.
By Shannon's information inequalities, for $X \in \MRF(G)$, the function $A \longmapsto \H(X_A)$ is $G$-polymatroidal (recall Definition~\ref{df:G-polymat}). Hence, assuming $\H(X) > 0$, the normalized entropy function $h_X$ belongs to $\mc P(G)$. By Lemma~\ref{la:entropy-of-chordal-MRF}, it follows that
\begin{equation}\label{eq:entropy-of-chordal-MRF}
    \H(X) = 
    \sum_{S \subseteq \MaxCliques(G)} -(-1)^{|S|} \H(X_{\cap S}).
\end{equation}

We denote by $\MRF(G,\Omega)$ the set of all Markov random fields
over $G$ with values in $\Omega$. We write $\MRF(G)$ for the class
of all Markov random fields over $G$. Note that $\MRF(G)$ depends
only on the underlying simple graph of $G$. If $G_1$ and $G_2$
are simple graphs such that $V_{G_1} = V_{G_2}$ and $E_{G_1}
\supseteq E_{G_2}$, then $\MRF(G_1) \subseteq \MRF(G_2)$, i.e.,
every Markov random field over $G_1$ is a Markov random field over
$G_2$.

\begin{ex}\label{ex:entropic}
For all graphs $G$ and $T$ such that $G \to T$, the uniform distribution on $\Hom(G,T)$, viewed as an element of $\ProbDist((V_T)^{V_G})$, is a Markov random field over $G$ with entropy $\log\hom(G,T)$.
\end{ex}

The next lemma gives a mechanism for constructing one Markov random field from another.

\begin{la}[Pullback of a MRF]\label{la:pullback}
Let $\varphi$ be a homomorphism from a chordal graph $F$ to a graph $G$. 
Then for every $X \in \MRF(G,\Omega)$ there
exists a unique $\wt X \in \MRF(F,\Omega)$ (called the {\em pullback
of $X$ along $\varphi$}) such that for every clique $C \in
\Cliques(F)$, the marginal distributions $\sq{\wt X_c : c \in C}$ and
$\sq{X_{\varphi(c)} : c \in C}$ are identical. Moreover, if $\Omega
= V_T$ where $T$ is a graph such that $\Supp(X) \subseteq \Hom(G,T)$,
then $\Supp(\wt X) \subseteq \Hom(F,T)$.
\end{la}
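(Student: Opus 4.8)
The plan is to build $\wt X$ explicitly by specifying its finite-dimensional ``local'' marginals along an elimination ordering of $F$, then invoke the standard theory of Markov random fields on chordal (decomposable) graphs to glue these together into a genuine distribution over $\Omega^{V_F}$, and finally check that the glued distribution lands in $\MRF(F,\Omega)$, agrees with $X$ on cliques, and is the unique such distribution. First I would fix an elimination ordering $v_1,\dots,v_n$ for $F$; for each $j$, let $N_j \subseteq \{v_1,\dots,v_{j-1}\}$ be the set of earlier neighbors of $v_j$, which by chordality (elimination ordering) is a clique in $F$, so $N_j \cup \{v_j\}$ is also a clique. The key point is that $\varphi$ maps each clique of $F$ to a clique of $G$, so $\varphi(N_j \cup \{v_j\})$ is a clique of $G$ and the marginal $X_{\varphi(N_j\cup\{v_j\})}$ of the given MRF is a well-defined distribution on $\Omega^{N_j\cup\{v_j\}}$ (using $\varphi$ to identify coordinates). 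I would then define $\wt X$ by the ``chain rule along the elimination ordering'': the conditional law of $\wt X_{v_j}$ given $\wt X_{v_1},\dots,\wt X_{v_{j-1}}$ is declared to depend only on $\wt X_{N_j}$ and to equal the conditional law of $X_{\varphi(v_j)}$ given $X_{\varphi(N_j)}$ under the appropriate coordinate identification. This recursively determines a distribution $\wt X \in \ProbDist(\Omega^{V_F})$.

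The next step is to verify the three required properties. \emph{Clique marginals:} by induction on $j$ along the elimination ordering one shows that for every $j$ the marginal $\wt X_{\{v_1,\dots,v_j\}}$, restricted to any clique $C$ of $F|_{\{v_1,\dots,v_j\}}$, matches $X_{\varphi(C)}$; the inductive step uses that $C \cap \{v_1,\dots,v_{j-1}\}$ is contained in (a clique, hence in) the clique $N_j$ when $v_j \in C$, together with the MRF Markov property of $X$ on $G$ which guarantees the conditional law we copied is consistent with the smaller marginals. \emph{$\wt X$ is an MRF over $F$:} this is immediate from the construction, since a distribution obtained by a chain rule in which each new variable depends only on its earlier neighbors (an eliminable set) satisfies all the separation identities $\H(\wt X_A) + \H(\wt X_B) = \H(\wt X_{A\cup B}) + \H(\wt X_{A\cap B})$ whenever $A\cap B$ separates $A\setminus B$ from $B\setminus A$ — this is the defining property of Markov random fields on decomposable graphs, and one can either quote the standard Hammersley–Clifford / junction-tree fact or re-derive it by induction on $|A\cup B|$ using submodularity (Lemma~\ref{la:entropy-of-chordal-MRF} is in the same spirit). \emph{Uniqueness:} any $\wt X' \in \MRF(F,\Omega)$ whose clique marginals agree with those of $X$ (via $\varphi$) must, by the Markov factorization of decomposable graphs, be the product of its clique marginals divided by its separator marginals along the same ordering, hence equals $\wt X$ coordinate-by-coordinate.

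Finally, for the ``moreover'' clause, suppose $\Omega = V_T$ and $\Supp(X) \subseteq \Hom(G,T)$. I want $\Supp(\wt X) \subseteq \Hom(F,T)$, i.e.\ every $\psi$ in the support of $\wt X$ is a homomorphism $F \to T$. For an edge $(a,b) \in E_F$, the pair $\{a,b\}$ is a clique of $F$ (an edge, hence a clique in the underlying simple graph), so by the clique-marginal property $\wt X_{\{a,b\}} = X_{\{\varphi(a),\varphi(b)\}}$ under the identification $a \leftrightarrow \varphi(a)$, $b\leftrightarrow\varphi(b)$; since $(a,b)\in E_F$ and $\varphi$ is a homomorphism, $(\varphi(a),\varphi(b))\in E_G$, and since every point of $\Supp(X)$ is a homomorphism $G\to T$, the marginal $X_{\{\varphi(a),\varphi(b)\}}$ is supported on edges $(\psi(\varphi(a)),\psi(\varphi(b))) \in E_T$. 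Hence $\wt X_{\{a,b\}}$ is supported on pairs $(s,t)$ with $(s,t)\in E_T$, so any $\psi\in\Supp(\wt X)$ sends the edge $(a,b)$ to an edge of $T$; as this holds for every edge, $\psi \in \Hom(F,T)$.

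I expect the main obstacle to be the careful bookkeeping in the clique-marginal induction: one must make sure that when a clique $C$ of $F$ acquires its largest vertex $v_j$, the part $C\setminus\{v_j\}$ really sits inside the neighborhood clique $N_j$ on which we conditioned, and that the conditional law we copied from $X$ is \emph{consistent} (i.e.\ the marginal of $X_{\varphi(N_j\cup\{v_j\})}$ onto $\varphi(N_j)$ really is $X_{\varphi(N_j)}$, and more importantly onto $\varphi(C\setminus\{v_j\})$ it agrees with what the induction has already built). This consistency is exactly where the MRF hypothesis on $X$ over $G$ is used, via the separation identity applied to $A = \varphi(N_j\cup\{v_j\})$ and suitable $B$; once this is set up correctly the rest is routine. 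An alternative, perhaps cleaner write-up would phrase the whole construction in terms of the junction tree of $F$ (clique tree of the chordal graph $F$) and define $\wt X$ directly by the product-over-cliques-divided-by-product-over-separators formula with the clique potentials pulled back from $X$ along $\varphi$; uniqueness and the MRF property then come for free from the standard decomposable-graph theory, and only the clique-marginal agreement and the support claim need checking.
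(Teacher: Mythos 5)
Your construction is essentially the paper's own proof (which is given there only as a sketch): fix an elimination ordering of the chordal graph $F$ and sample each $\wt X_{v_j}$ from the conditional law of $X_{\varphi(v_j)}$ given the already-chosen values, then verify the MRF property, clique-marginal agreement, uniqueness, and the support claim. If anything you are more careful than the paper's wording, which literally conditions on all of $\wt X_{v_1},\dots,\wt X_{v_{j-1}}$ rather than only on the earlier-neighbor clique $N_j$; conditioning on $N_j$ alone, as you do, is what is actually needed for $\wt X$ to be a Markov random field over $F$ (consider two nonadjacent vertices of $F$ mapped by $\varphi$ to adjacent vertices of $G$) and is what the worked example in \S\ref{sub:method} implements.
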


We already saw pullbacks of Markov random fields in action when we
computed $\hde(\Vee,\Tri)$ in \S\ref{sub:method}.

\begin{proof}[Proof Sketch]
We can construct $\wt X$ according to the following procedure. Fix
an arbitrary elimination ordering $v_1,\dots,v_n$ of $F$ (so that
$v_j$ is an eliminable vertex of $F|_{\{v_1,\dots,v_j\}}$ for all $j
\in [n]$). We now pick values for $\wt X_{v_1},\dots,\wt X_{v_n}$
(i.e., the coordinates of joint distribution $\wt X = (\wt X_v)_{v
\in F} \in \ProbDist(\Omega^{V_F})$) in order. Assuming values $\wt
X_{v_1},\dots,\wt X_{v_{j-1}}$ have been picked, we next pick $\wt
X_{v_j}$ according to the distribution $X_{\varphi(v_j)}$
conditioned on $X_{\varphi(v_i)} = \wt X_{v_i}$ for $i=1,\dots,j-1$.

One can show that the resulting distribution $\wt X$ is a Markov
random field over $F$. Indeed, it is the unique Markov random field
meeting the conditions of the lemma; in particular $\wt X$ is
independent of the particular elimination ordering $v_1,\dots,v_n$
of $F$. In the event that $\Omega = V_T$ where $T$ is a graph such
that $\Supp(X) \subseteq \Hom(G,T)$, it is easy to show that every
point of $(V_T)^{V_F}$ in the support of $\wt X$ is a homomorphism
in $\Hom(F,T)$.
\end{proof}

\section{Proof of Theorem~\ref{thm:lower} (HDE Lower Bound for Chordal $F$)}\label{sec:lb}

Suppose $F$ is chordal and $\Hom(F,G)$ is nonempty. Let $T$ be a graph
such that $\hom(G,T) \ge 2$. Let $X \in \ProbDist((V_T)^{V_G})$ be the uniform distribution
on $\Hom(G,T)$ (so $X \in \MRF(G)$, see Example~\ref{ex:entropic}).  
Let $h_X : \wp(V_G) \lra [0,1]$ be the
normalized entropy function of $X$ and note that $h_X \in \mc P(G)$ and
\begin{equation*}
    h_X(A) = 
    \H(X_A) / \log\hom(G,T).\label{eq:h0}
\end{equation*}

For each homomorphism $\varphi \in \Hom(F,G)$, let $Y^\varphi \in
\MRF(F,V_T)$ be the pullback of $X$ along $\varphi$, as described in
Lemma~\ref{la:pullback}. We have $\Supp(Y^\varphi) \subseteq
\Hom(F,T)$ and hence $\H(Y^\varphi) \le \log \hom(F,T)$.

By equation (\ref{eq:entropy-of-chordal-MRF}) we have the following
identity (independent of the graph $T$):
\begin{equation*}
    \H(Y^\varphi) = \sum_{S \subseteq \MaxCliques(F)} -(-1)^{|S|}
    \H(X_{\varphi(\cap S)}) = \sum_{S \subseteq \MaxCliques(F)} -(-1)^{|S|}
    h_X(\varphi({\ts\bigcap} S))\H(X).
\label{hmanip}
\end{equation*}
It follows that
\begin{equation*}\label{eq:lower}
    \log\hom(F,T) \ge \max_{\varphi \in \Hom(F,G)}
     \sum_{S \subseteq \MaxCliques(F)} -(-1)^{|S|}
    h_X(\varphi({\ts\bigcap} S))\log\hom(G,T).
\end{equation*}
Since this inequality holds for all graphs $T$ such that $\hom(G,T)
\ge 2$, we have
\begin{align*}
    \hde(F,G) &=
    \inf_{T\,:\,\hom(G,T) \ge 2}
    \frac{\log\hom(F,T)}{\log\hom(G,T)}\quad\text{(by (\ref{hdeotherdef}))}\\
    &\ge
    \inf_{T\,:\,\hom(G,T) \ge 2}
    \max_{\varphi \in \Hom(F,G)}  \sum_{S \subseteq \MaxCliques(F)} -(-1)^{|S|}
    h_X(\varphi({\ts\bigcap} S)).
\end{align*}
Since $h_X \in \mc P(G)$ for all $T$, we get the desired result
that
\[
    \hde(F,G) \ge \min_{p \in \mc P(G)}
    \max_{\varphi \in \Hom(F,G)}  \sum_{S \subseteq \MaxCliques(F)} -(-1)^{|S|}
    p(\varphi({\ts\bigcap} S)).\qedhere
\]

\section{Proof of Theorem~\ref{thm:upper} (HDE Upper Bound)}\label{sec:ub}

Fix a graph $G$ and a function $q \in \mc Q(G)$. That is, let $q$ be
a function from $\wp(V_G)$ to $[0,1]$ such that $q(\emptyset) = 0$
and $\sum_{A \subseteq V_G} q(A) \cdot \components(G|_A) = 1$.

We define a sequence $(T_n)_{n \ge 1}$ of ``target'' graphs as
follows. Vertices of $T_n$ are all pairs $(x,i)$ where $x \in V_G$
and $i \in \N^{\{A \subseteq V_G : x \in A\}}$ is a function from
$\{A \subseteq V_G : x \in A\}$ to $\N$ which satisfies $i(A) <
n^{q(A)}$. There is an edge in $T_n$ from vertex $(x,i)$ to vertex
$(y,j)$ if and only if $(x,y) \in E_G$ and $i(A) = j(A)$ for all
$\{x,y\} \subseteq A \subseteq V_G$.

Let $\pi_n$ denote the homomorphism from $T_n$ to $G$ defined by
$\pi_n((x,i)) = x$. Let $F$ be a graph and suppose $\varphi$ is a
homomorphism from $F$ to $G$. We denote by $\Hom_\varphi(F,T_n)$ the
set of homomorphisms $\psi : F \to T_n$ such that $\pi_n \circ \psi
= \varphi$, i.e., the following diagram commutes:
\[
    \xymatrix{&T_n  \ar[d]^{\pi_n} \\ F \ar[ru]^\psi \ar[r]^\varphi &
    G}
\]
Let $\hom_\varphi(F,T_n) = |\Hom_\varphi(F,T_n)|$ and note
that
\begin{equation}\label{eq:hom}
    \hom(F,T_n) = \sum_{\varphi \in \Hom(F,G)}\hom_\varphi(F,T_n).
\end{equation}

\begin{la}\label{la:hom_varphi}
$\ds\lim_{n \to \infty} \log_n \hom_\varphi(F,T_n) = \sum_{A
\subseteq V_G} q(A) \cdot \components(F|_{\varphi^{-1}(A)}).$
\end{la}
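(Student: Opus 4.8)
The plan is to identify $\Hom_\varphi(F,T_n)$ with an explicit product, indexed by subsets $A \subseteq V_G$, of sets of functions that are forced to be constant on the connected components of $F|_{\varphi^{-1}(A)}$, and then simply count.

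First I would unpack the definition. A map $\psi : F \to T_n$ with $\pi_n \circ \psi = \varphi$ must send each $u \in V_F$ to a vertex of the form $(\varphi(u), i_u)$, where $i_u$ assigns to each $A \ni \varphi(u)$ a natural number $i_u(A) < n^{q(A)}$. Transposing this data, such a $\psi$ is the same thing as a family $(g_A)_{A \subseteq V_G}$ of functions $g_A : \varphi^{-1}(A) \lra \N$, where $g_A(u) := i_u(A)$; this is well-defined precisely because $u \in \varphi^{-1}(A) \iff \varphi(u) \in A$. The requirement that $\psi$ be a homomorphism into $T_n$ translates, one edge of $F$ at a time, to two conditions on $(g_A)$: (i) $g_A(u) < n^{q(A)}$ for every $u \in \varphi^{-1}(A)$; and (ii) $g_A(u) = g_A(v)$ whenever $(u,v) \in E_F$ and $\{\varphi(u),\varphi(v)\} \subseteq A$ — the edge condition $(\varphi(u),\varphi(v)) \in E_G$ being automatic since $\varphi$ is a homomorphism. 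Since $\{\varphi(u),\varphi(v)\} \subseteq A$ is equivalent to $\{u,v\} \subseteq \varphi^{-1}(A)$, condition (ii) says exactly that $g_A$ is constant across every edge of the induced subgraph $F|_{\varphi^{-1}(A)}$, i.e.\ constant on each of its connected components. Conversely, any family $(g_A)$ satisfying (i) and (ii) determines a unique $\psi \in \Hom_\varphi(F,T_n)$ by reversing the bijection, so $\psi \longmapsto (g_A)_A$ is a bijection.

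Counting is then immediate, because there is no constraint linking different values of $A$: the number of functions $g_A : \varphi^{-1}(A) \lra \N$ satisfying (i) and (ii) is $m_A^{\,\components(F|_{\varphi^{-1}(A)})}$, where $m_A = |\{k \in \N : k < n^{q(A)}\}|$ and we use the convention $\components(F|_\emptyset) = 0$ (consistent with its use in Theorem~\ref{thm:upper} and Lemma~\ref{la:chordal-graph-identity}), so that subsets $A$ with $\varphi^{-1}(A) = \emptyset$ contribute a harmless factor of $1$. Hence $\hom_\varphi(F,T_n) = \prod_{A \subseteq V_G} m_A^{\,\components(F|_{\varphi^{-1}(A)})}$. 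Since $q(A) \ge 0$ and $n \ge 1$ we have $n^{q(A)} \le m_A \le 2 n^{q(A)}$, so $\log_n m_A \to q(A)$ as $n \to \infty$; taking $\log_n$ of the product and passing to the limit of this finite sum yields $\lim_{n\to\infty}\log_n \hom_\varphi(F,T_n) = \sum_{A \subseteq V_G} q(A)\cdot\components(F|_{\varphi^{-1}(A)})$, as claimed.

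I do not expect a genuine obstacle here; the argument is essentially bookkeeping. The points that require care are the transposition of $\psi$ into the family $(g_A)$ — in particular keeping straight that each $i_u$ is a partial assignment, defined only on the $A$ containing $\varphi(u)$ — and pinning down the conventions in the degenerate cases $\varphi^{-1}(A) = \emptyset$ and $q(A) = 0$ so that the product formula for $\hom_\varphi(F,T_n)$ holds exactly.
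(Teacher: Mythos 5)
Your proposal is correct and follows essentially the same route as the paper's proof: both identify $\Hom_\varphi(F,T_n)$ with families of index functions that the edge condition of $T_n$ forces to be constant on the connected components of $F|_{\varphi^{-1}(A)}$, and then count independent choices per component. Your transposition into the functions $g_A$ and your explicit handling of the rounding and of the degenerate cases $\varphi^{-1}(A)=\emptyset$, $q(A)=0$ are just tidier bookkeeping of the same argument.
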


\begin{proof}
Let $\psi \in \Hom_\varphi(F,T_n)$. Each vertex $u \in V_F$ is
mapped under $\psi$ to a pair $(\varphi(u),i_u)$ for some
$i_u \in \N^{\{A \subseteq V_G : \varphi(u) \in A\}}$ subject to
$i_u(A) < n^{q(A)}$. The family of functions $(i_u)_{u \in V_F}$ is
further subject to the constraint that $i_u(A) = i_v(A)$ for all
$u,v \in V_F$ and $\{\varphi(u),\varphi(v)\} \subseteq A \subseteq
V_G$ such that $u$ and $v$ lie in the same connected component of
$F|_{\varphi^{-1}(A)}$. To see this, consider an undirected path in
$F|_{\varphi^{-1}(A)}$ from $u$ to $v$, i.e., a sequence $u =
w_0,w_1,w_2,\dots,w_k = v$ such that $(w_{\ell-1},w_\ell)$ or
$(w_\ell,w_{\ell-1})$ is an edge in $F|_{\varphi^{-1}(A)}$ for every
$\ell \in \{1,\dots,k\}$. Suppose $\{\varphi(u),\varphi(v)\}
\subseteq A \subseteq V_G$ and $u,v$ lie in the same connected
component of $F|_{\varphi^{-1}(A)}$. Then clearly
$\{\varphi(w_{\ell-1}),\varphi(w_\ell)\} \subseteq A$ for all $\ell
\in \{1,\dots,k\}$. Since $(w_{\ell-1},w_\ell)$ or
$(w_\ell,w_{\ell-1})$ is an edge in $F$ and $\psi$ is a homomorphism
from $F$ to $T_n$, we have that $(\psi(w_{\ell-1}),\psi(w_\ell))$ or
$(\psi(w_\ell),\psi(w_{\ell-1}))$ is an edge in $T_n$. It follows
that $i_{\varphi(w_{\ell-1})}(B) = i_{\varphi(w_\ell)}(B)$ for all
$\{\varphi(w_{\ell-1}),\varphi(w_\ell)\} \subseteq B \subseteq V_G$.
In particular, we have $i_{\varphi(w_{\ell-1})}(A) =
i_{\varphi(w_\ell)}(A)$. Therefore $i_u(A) = i_{w_0}(A) = \dots =
i_{w_k}(A) = i_v(A)$.

Conversely, every family of functions $\sq{j_u \in \N^{\{A \subseteq
V_G : \varphi(u) \in A\}} : u \in V_F}$
subject to $j_u(A) < n^{q(A)}$ and $j_u(A) = j_v(A)$ for all $u,v
\in V_F$ and $\{\varphi(u),\varphi(v)\} \subseteq A \subseteq V_G$
such that $u$ and $v$ lie in the same connected component of
$F|_{\varphi^{-1}(A)}$, determines a distinct homomorphism in
$\Hom_\varphi(F,T_n)$. Thus, $\hom_\varphi(F,T_n)$ equals the number
of such families $(j_u)_{u \in V_F}$. This is precisely $\prod_{A
\subseteq V_G} \big \lceil n^{q(A) \cdot
\components(F|_{\varphi^{-1}(A)})} \big \rceil$, since for each $A
\subseteq V_G$ and each connected component $U$ of
$F|_{\varphi^{-1}(A)}$, we have an independent choice of numbers
$m_{A,U} \in \{0,\dots,\lceil n^{q(A)} \rceil - 1\}$ such that
$j_u(A) = m_{A,U}$ for all $u \in U$. Taking logarithms in base $n$,
we get the statement of the lemma.
\end{proof}

\begin{cor}\label{cor:hom}
$\ds\lim_{n \to \infty} \log_n \hom(F,T_n) = \max_{\varphi \in
\Hom(F,G)} \sum_{A \subseteq V_G} q(A) \cdot
\components(F|_{\varphi^{-1}(A)}).$
\end{cor}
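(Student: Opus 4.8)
The plan is to deduce Corollary~\ref{cor:hom} directly from Lemma~\ref{la:hom_varphi} together with identity~(\ref{eq:hom}), exploiting the fact that $\Hom(F,G)$ is a fixed finite set that does not depend on $n$. Abbreviate $m = |\Hom(F,G)| \ge 1$, set $c_\varphi(n) = \log_n \hom_\varphi(F,T_n)$, and set $\gamma_\varphi = \sum_{A \subseteq V_G} q(A)\cdot\components(F|_{\varphi^{-1}(A)})$, so that Lemma~\ref{la:hom_varphi} asserts $c_\varphi(n) \to \gamma_\varphi$ for each $\varphi \in \Hom(F,G)$.

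First I would sandwich $\hom(F,T_n)$ between its largest summand and $m$ times its largest summand. By~(\ref{eq:hom}) we have $\hom(F,T_n) = \sum_{\varphi \in \Hom(F,G)} n^{c_\varphi(n)}$, hence
\[
    n^{\max_\varphi c_\varphi(n)} \;\le\; \hom(F,T_n) \;\le\; m\cdot n^{\max_\varphi c_\varphi(n)}.
\]
Applying $\log_n$ gives $\max_\varphi c_\varphi(n) \le \log_n\hom(F,T_n) \le \log_n m + \max_\varphi c_\varphi(n)$, and since $\log_n m \to 0$ as $n \to \infty$, we conclude that $\lim_{n\to\infty}\log_n\hom(F,T_n) = \lim_{n\to\infty}\max_\varphi c_\varphi(n)$, provided the right-hand limit exists.

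Second I would invoke the elementary fact that a pointwise maximum of finitely many convergent sequences converges to the maximum of their limits: given $\eps>0$, pick $N$ so that $|c_\varphi(n)-\gamma_\varphi|<\eps$ for all $n\ge N$ and all (finitely many) $\varphi$; since the map $(x_\varphi)_\varphi \mapsto \max_\varphi x_\varphi$ is $1$-Lipschitz in the sup norm, $|\max_\varphi c_\varphi(n)-\max_\varphi\gamma_\varphi|<\eps$ for $n\ge N$. Hence $\max_\varphi c_\varphi(n)\to\max_\varphi\gamma_\varphi$, which is exactly the right-hand side of the corollary. There is no serious obstacle here: all the genuine work was already carried out in the proof of Lemma~\ref{la:hom_varphi}, and the only minor points are that the multiplicative constant $m$ becomes additive and then vanishes once we apply $\log_n$ and let $n\to\infty$, and that $\max$ and $\lim$ commute because $\Hom(F,G)$ is finite.
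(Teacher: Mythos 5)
Your proposal is correct and matches the paper's intent exactly: the paper states that Corollary~\ref{cor:hom} "follows immediately from (\ref{eq:hom}) and Lemma~\ref{la:hom_varphi}", and your sandwich argument (largest summand versus $m$ times the largest summand, with $\log_n m \to 0$) together with the observation that $\max$ and $\lim$ commute over the fixed finite set $\Hom(F,G)$ is precisely the routine verification being elided. Nothing further is needed.
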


This corollary follows immediately from (\ref{eq:hom}) and Lemma~\ref{la:hom_varphi}. We are ready to prove Theorem~\ref{thm:upper}.

\begin{proof}[Proof of Theorem~\ref{thm:upper}]
Suppose $F \to G$. For $q \in \mc Q(G)$, let $(T_n)_{n \ge 1}$ be the sequence of ``target'' graphs as above. By
Corollary~\ref{cor:hom} (applied to $G$), we have
\begin{equation*}\label{eq:corG}
    \lim_{n \to \infty} \log_n \hom(G,T_n) = \max_{\varphi \in
    \Hom(G,G)} \sum_{A \subseteq V_G} q(A) \cdot
    \components(G|\varphi^{-1}(A))
    \ge \sum_{A \subseteq V_G} q(A) \cdot
    \components(G|_A) = 1
\end{equation*}
where the middle inequality is obtained by taking $\varphi$ to be the identity homomorphism on $G$.

We now have
\[
    \hde(F,G) \stackrel{(\ref{hdeotherdef})}{\le}
    \lim_{n \to \infty}\frac{\log_n\hom(F,T_n)}{\log_n\hom(G,T_n)}
    \le 
    \lim_{n \to \infty}\log_n\hom(F,T_n)
    = \max_{\varphi \in
    \Hom(F,G)} \sum_{A \subseteq V_G} q(A) \cdot
    \components(F|_{\varphi^{-1}(A)})
\]
where the last equality is by Corollary~\ref{cor:hom}. Since this inequality holds for all $q \in \mc Q(G)$, it follows that
\[
    \hde(F,G) \le \min_{q \in \mc Q(G)} \max_{\varphi \in
    \Hom(F,G)} \sum_{A \subseteq V_G} q(A) \cdot
    \components(F|_{\varphi^{-1}(A)}).\qedhere
\]
\end{proof}

\section{Proof of Theorem~\ref{thm:tight} (HDE of Chordal $F$ and Series-Parallel $G$)}\label{sec:tight}

Suppose $F$ is chordal and $G$ is series-parallel and $F \to G$. The HDE lower bound of Theorem~\ref{thm:lower} states\begin{align*}
  \hde(F,G) &\ge \min_{p \in \mc P(G)} \max_{\varphi \in
  \Hom(F,G)} \sum_{S \subseteq
  \MaxCliques(F)} -(-1)^{|S|} \cdot p(\varphi({\ts\bigcap} S)).
\end{align*}
Let $p$ be an arbitrary function in $\mc P(G)$. To prove Theorem~\ref{thm:tight} (i.e., to prove this inequality is tight), we construct a sequence of graphs $T_n$ satisfying
\begin{align}
\lim_{n \to \infty} \log_n\hom(G,T_n) &\ge 1,\label{eq:G-T}\\
\lim_{n \to \infty} \log_n\hom(F,T_n) &\le \max_{\varphi \in \Hom(F,G)} \sum_{S \subseteq \MaxCliques(F)} -(-1)^{|S|}p(\varphi({\ts\bigcap}S)).\label{eq:F-T}
\end{align}
Tightness of the above HDE lower bound then follows from (\ref{hdeotherdef}).

To simplify matters, we first consider the special case that $G$ is chordal. (Since $G$ is chordal and series-parallel, it has clique number $\le 3$, i.e., $G$ is a $2$-tree.) After proving Theorem~\ref{thm:tight} in this special case, we give the argument for general series-parallel $G$ in Section~\ref{sub:series-parallel}.

We construct $T = T_n$ in two stages. For every $A \in \MaxCliques(G)$, we construct a graph $T_A$ together with a homomorphism $\pi_A : T_A \lra K_A$ (the complete graph on $A$, viewed as a subgraph of $G$). We then patch together (via a randomized gluing procedure) the various graphs $T_A$ into a graph $T$ together with a homomorphism $\pi : T \lra G$. (This indexing over maximal cliques in the chordal graph $G$ is essential to defining the gluing procedure in a consistent fashion.)

For $a,b,c \in V_G$, we write $p(a),p(ab),p(abc)$ for $p(\{a\}),p(\{a,b\}),p(\{a,b,c\})$ respectively. For $A \subseteq V_G$, we treat $n^{p(A)}$ as integers (by rounding), mindful to preserve identities such as $n^{p(a)+p(bc)} = n^{p(a)}n^{p(bc)}$. Because we are ultimately interested in asymptotics in log base $n$, this kind of rounding presents no difficulties.

\subsection{Construction of $T_A$}

Consider any $A \in \MaxCliques(G)$ and note that $|A| \in \{1,2,3\}$. 

If $|A| = 1$ (say $A = \{a\}$), then $T_A$ is the empty (edgeless) graph on $n^{p(a)}$ vertices and $\pi_A$ maps all vertices of $T_A$ to $a$.

Now suppose $|A| = 2$ (say $A = \{a,b\}$). Letting\begin{equation}\label{eq:alpha-beta-gamma}
\alpha = n^{p(a)}, \quad \beta = n^{p(b)}, \quad \gamma = n^{p(a) + p(b) - p(ab)}
\end{equation}
(note that $\gamma \ge 1$ by submodularity of $p$), $T_A$ is the graph $\gamma {\cdot} K_{\alpha,\beta}$ (i.e., $\gamma$ disjoint copies of the complete bipartite graph $K_{\alpha,\beta}$) and $\pi_A \in \Hom(T_A,K_A)$ maps the two parts of each $K_{\alpha,\beta}$ to vertices $a$ and $b$ of $K_A$ (i.e., the $\alpha$-size part to $a$ and the $\beta$-size part to $b$).

We now examine the nontrivial case when $|A| = 3$ (say $A = \{a,b,c\}$). Consider the restriction of $p$ to $\wp(A)$. So long as $p(A) > 0$, the normalized function $\frac{p}{p(A)} \uhr \wp(A)$ is $K_A$-polymatroidal (if $p(A)=0$, then $p \uhr \wp(A)$ is identically zero). By Example~\ref{ex:K_3}, it follows that $p \uhr \wp(A)$ is a nonnegative linear combination of functions $f_a,f_b,f_c,f_{ab},f_{ac},f_{bc},f_{abc}$ and $f_{\mr{RS}}$. That is,
\[
	p \uhr \wp(A) = \sum_{i \in \{a,b,c,ab,ac,bc,abc,\mr{RS}\}} \lambda_i f_i \text{ for some } \lambda_i \ge 0.
\]
(We will harmlessly treat $n^{\lambda_i}$ as integers.) Note the identities:
\begin{align}
  p(a) &= \lambda_a + \lambda_{ab} + \lambda_{ac} + \lambda_{abc} + \ts\frac{1}{2}\lambda_{\mr{RS}},\notag\\  
  p(ab) &= \lambda_a + \lambda_b + \lambda_{ab} + \lambda_{ac} + \lambda_{bc} + \lambda_{abc} + \ts\frac{1}{2}\lambda_{\mr{RS}},\label{eq:lambda}\\
  p(abc) &= \lambda_a + \lambda_b + \lambda_c + \lambda_{ab} + \lambda_{ac} + \lambda_{bc} + \lambda_{abc} + \ts\frac{1}{2}\lambda_{\mr{RS}}.\notag
\end{align}

For each $i \in \{a,b,c,ab,ac,bc,abc,\mr{RS}\}$, we will construct a graph $T_{A,i}$ and a homomorphism $\pi_{A,i} : T_{A,i} \lra K_A$. Once we have defined these, we obtain $T_A$ as the {\em fibered product} of graphs $T_{A,i}$: 
\begin{itemize}
\item
the vertices of $T_A$ are the elements $(v_i) \in \prod_i T_{A,i}$ such that $\pi_{A,i}(v_i) = \pi_{A,j}(v_j)$ for all $i,j \in \{a,b,c,ab,ac,bc,abc,\mr{RS}\}$, and
\item
there is an edge between vertices $(v_i)$ and $(w_i)$ of $T_A$ if and only if there is an edge between $v_i$ and $w_i$ in $T_{A,i}$ for every $i \in \{a,b,c,ab,ac,bc,abc,\mr{RS}\}$.
\end{itemize}
The homomorphism $\pi_A : T_A \lra K_A$ is defined in the obvious way: 
\begin{itemize}
\item
$\pi_A((v_i))$ equals the common value of $\pi_{A,i}(v_i)$.
\end{itemize}

We now define $T_{A,i}$ and $\pi_{A,i}$ for the various $i \in \{a,b,c,ab,ac,bc,abc,\mr{RS}\}$. In all cases, after defining $T_{A,i}$, the homomorphism $\pi_{A,i}$ will be obvious. Also, the definitions of $T_{A,b}$ and $T_{A,c}$ will be obvious after stating the definition of $T_{A,a}$, so we include only the cases $i \in \{a,ab,abc,\mr{RS}\}$.
\begin{itemize}
\item
$T_{A,a}$ has vertex set $(\{a\} \times [n^{\lambda_a}]) \cup \{b,c\}$ and edges $\{b,c\}$ and $\{(a,i),b\}$ and $\{(a,i),c\}$ for all $i \in [n^{\lambda_a}]$.
\item
$T_{A,ab}$ has vertex set $(\{a,b\} \times [n^{\lambda_{ab}}]) \cup \{c\}$ and edges $\{(a,i),(b,i)\}$ and $\{(a,i),c\}$ and $\{(b,i),c\}$ for all $i \in [n^{\lambda_{ab}}]$.
\item
$T_{A,abc}$ has vertex set $\{a,b,c\} \times [n^{\lambda_{abc}}]$ and edges $\{(a,i),(b,i)\}$ and $\{(a,i),(c,i)\}$ and $\{(b,i),(c,i)\}$ for all $i \in [n^{\lambda_{abc}}]$.
\item
If $\lambda_{\mr{RS}} = 0$, then $T_{A,\mr{RS}} = K_A$ and $\pi_A$ is the identity function on $A$.
\end{itemize}
To define the remaining graph $T_{A,\mr{RS}}$ when $\lambda_{\mr{RS}} > 0$, we use a result of Ruzsa and Szemer\'edi~\cite{RS}.

\begin{thm}[Ruzsa-Szemer\'edi~\cite{RS}]\label{thm:RS}
For all $m \in \N$, there exists a tripartite graph $H(m)$ in which:
\begin{enumerate}[\normalfont\hspace{\parindent}(i)]\setlength{\itemsep}{2pt}
\item
each part has size $m$, 
\item
there are $m^{2-o(1)}$ triangles, and
\item
every edge is contained in exactly one triangle.
\end{enumerate}
\end{thm}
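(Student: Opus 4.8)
My plan is to obtain this statement not from scratch but as the classical Ruzsa--Szemer\'edi packaging of Behrend's theorem on progression-free sets of integers. The one external ingredient I would cite is Behrend's construction: for every positive integer $n$ there is a set $B \subseteq [n]$ with $|B| \ge n \cdot e^{-O(\sqrt{\log n})} = n^{1-o(1)}$ in which the only solutions of $b_1 + b_2 = 2b_3$ with $b_1, b_2, b_3 \in B$ are the trivial ones $b_1 = b_2 = b_3$ (equivalently, $B$ contains no three-term arithmetic progression).

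Given such a $B$, I would take $H$ to be the tripartite graph on parts $V_1 = [n]$, $V_2 = [2n]$, $V_3 = [3n]$ (disjoint copies of initial segments of $\N$) whose edges are, for each $x \in [n]$ and each $b \in B$: the edge $\{x, x+b\}$ between $V_1$ and $V_2$, the edge $\{x+b, x+2b\}$ between $V_2$ and $V_3$, and the edge $\{x, x+2b\}$ between $V_1$ and $V_3$ (these endpoints lie in the stated parts since $x+b \le 2n$ and $x+2b \le 3n$). These three edges form a triangle $T_{x,b} := \{x, x+b, x+2b\}$, and since both $x$ and $b$ are recoverable from $T_{x,b}$ (as the $V_1$-vertex, and as the difference between the $V_2$-vertex and the $V_1$-vertex), the triangles $T_{x,b}$ are pairwise distinct. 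To give all three parts a common size $m := 3n$ I would add isolated vertices ($2n$ of them into $V_1$, $n$ into $V_2$); this affects neither the edge set nor the triangles, and $n^{2-o(1)} = m^{2-o(1)}$.

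The heart of the argument --- and the only step where progression-freeness of $B$ is used --- is to show that \emph{every} triangle of $H$ is one of the $T_{x,b}$. Since $H$ is tripartite, a triangle has exactly one vertex in each part, say $x \in V_1$, $y \in V_2$, $z \in V_3$; from the definition of the edges, $b_1 := y - x \in B$, $b_2 := z - y \in B$, and $z - x = 2 b_3$ for some $b_3 \in B$. Then $b_1 + b_2 = (y - x) + (z - y) = z - x = 2 b_3$, so progression-freeness forces $b_1 = b_2 = b_3 =: b$, whence the triangle is $T_{x,b}$. Granting this, (ii) follows by counting the $n |B| = n^{2-o(1)}$ distinct triangles $T_{x,b}$, and (iii) follows because every edge of $H$ was created inside some $T_{x,b}$ (so lies in at least one triangle), while the two endpoints of an edge determine the pair $(x,b)$ and hence the triangle containing it: a $V_1$--$V_2$ edge $\{x, x+b\}$ displays $x$ and $b$; a $V_2$--$V_3$ edge $\{x+b, x+2b\}$ gives $b$ as the difference of its endpoints and $x$ as twice the smaller endpoint minus the larger; and a $V_1$--$V_3$ edge $\{x, x+2b\}$ gives $x$ together with $b$ as half the difference.

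The real obstacle is Behrend's theorem itself; once a dense progression-free set is available, the construction above and the verification of (i)--(iii) are routine. For a self-contained treatment, the work lies in Behrend's high-dimensional geometry: one lets $B$ consist of the integers whose digits in a suitably chosen base $d$ (with digits small enough that addition never carries) form a vector of a fixed, maximally popular squared Euclidean length, so that a nontrivial three-term progression within $B$ would place the midpoint of two distinct points of a sphere back on that sphere --- impossible by strict convexity --- and one then optimizes $d$ and the radius to obtain the density $e^{-O(\sqrt{\log n})}$.
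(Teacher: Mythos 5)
Your construction is correct, and it is the standard derivation of the Ruzsa--Szemer\'edi ``(6,3)'' graph from Behrend's progression-free sets: the tripartition $[n],[2n],[3n]$ with triangles $\{x,x+b,x+2b\}$, the identity $b_1+b_2=2b_3$ forcing every triangle to be one of the intended ones, and the recovery of $(x,b)$ from any single edge all check out, as does the count $n|B|=n^{2-o(1)}=m^{2-o(1)}$ after padding the parts to a common size. Note, however, that the paper does not prove this statement at all: it is quoted as a black box from the Ruzsa--Szemer\'edi reference, with only a parenthetical remark that it is equivalent to the usual formulation in terms of a bipartite graph whose edge set decomposes into $m^{1-o(1)}$ induced matchings of size $m^{1-o(1)}$. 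So there is no proof in the paper to compare against; what you have supplied is the standard (and essentially the only known) route to the theorem, resting on Behrend's theorem, which you correctly isolate as the one genuinely nontrivial external ingredient.
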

(This is not the usual statement of the Ruzsa-Szemer\'edi result. However, it is easily seen to be equivalent to the usual statement that there exists a bipartite graph with parts of size $m$ whose edge set is the disjoint union of $m^{1-o(1)}$ induced matchings of size at least $m^{1-o(1)}$.) 

Using Theorem~\ref{thm:RS}, we define $T_{A,\mr{RS}}$ in the remaining case:
\begin{itemize}
\item
If $\lambda_{\mr{RS}} > 0$, let $T_{A,\mr{RS}}$ be the graph $H(n^{\frac{1}{2}\lambda_{\mr{RS}}})$ of Theorem~\ref{thm:RS} and let $\pi_{A,\mr{RS}} \in \Hom(T_{A,\mr{RS}},K_A)$ be any function mapping the three parts to $a$, $b$ and $c$.
\end{itemize}

Recalling the definition of $T_A$ (as a fibered product of graphs $T_{A,i}$), it is easy to check using equations (\ref{eq:lambda}) that the graph $T_A$ satisfies:
\begin{align*}
  |\{\text{vertices of $T_A$ which map to $a$ under $\pi_A$}\}| &= n^{p(a)},\\ 
  |\{\text{edges of $T_A$ which map to $\{a,b\}$ under $\pi_A$}\}| &= n^{p(ab) - o(1)},\\
  |\{\text{triangles in $T_A$}\}| &= n^{p(abc)-o(1)}. 
\end{align*}
Moreover, the $o(1)$ terms disappear whenever $\lambda_{\mr{RS}} = 0$.

\subsection{Gluing Procedure}

We now describe the randomized procedure for gluing together the various graphs $T_A$ and homomorphisms $\pi_A : T_A \lra K_A$ into a single graph $T$ and homomorphism $\pi : T_A \lra G$. It is enough to describe the procedure for gluing a pair of graphs $T_A$ and $T_B$ for $A,B \in \MaxCliques(G)$: there is an obvious way of simultaneously and consistently carrying out all pairwise gluings to obtain $T$ and $\pi$ (relying on the chordality of $G$).

Let $A,B \in \MaxCliques(G)$. There are three gluing procedures to consider, depending on $|A \cap B| \in \{0,1,2\}$. In the simplest case that $A \cap B = \emptyset$, the gluing of $T_A$ and $T_B$ is just the disjoint union $T_A \uplus T_B$ and gluing of homomorphisms $\pi_A$ and $\pi_B$ is obvious.

Next suppose that $|A \cap B| = 1$ (say $A \cap B = \{a\}$). Note that $|\pi_A^{-1}(a)| = |\pi_B^{-1}(a)| = n^{p(a)}$. The gluing of $T_A$ and $T_B$ is defined by starting with the disjoint union $T_A \uplus T_B$ and identifying pairs of vertices in $\pi_A^{-1}(a) \times \pi_B^{-1}(a)$ under a uniformly choosen random bijection between sets $\pi_A^{-1}(a)$ and $\pi_B^{-1}(a)$. 

Finally, suppose that $|A \cap B| = 2$ (say $A \cap B = \{a,b\}$). In this case, it must happen that $|A| = |B| = 3$. Define $\alpha,\beta,\gamma$ again by equation (\ref{eq:alpha-beta-gamma}) and consider the graph $\gamma{\cdot}K_{\alpha,\beta}$. We claim that bipartite graphs $T_A|_{\pi^{-1}_A(\{a,b\})}$ and $T_B|_{\pi^{-1}_B(\{a,b\})}$ both look like $\gamma{\cdot}K_{\alpha,\beta}$ after deleting an $n^{-o(1)}$-fraction of edges from the latter. 
(The proof of Claim~\ref{claim1}, below, follows easily from definitions.)

\begin{claim}\label{claim1}
There exist homomorphisms $\xi_A : T_A|_{\pi^{-1}_A(\{a,b\})} \lra \gamma{\cdot}K_{\alpha,\beta}$ and $\xi_B : T_A|_{\pi^{-1}_B(\{a,b\})} \lra \gamma{\cdot}K_{\alpha,\beta}$ such that
\begin{itemize}\setlength{\itemsep}{2pt}
\item
$\xi_A$ and $\xi_B$ are bijections (between vertex sets), and
\item
$\xi_A$ maps $\pi^{-1}_A(a)$ to the $\alpha$-side of $\gamma{\cdot}K_{\alpha,\beta}$ and $\pi^{-1}_A(b)$ to the $\beta$-side of $\gamma{\cdot}K_{\alpha,\beta}$, and similarly for $\xi_B$.
\end{itemize}
Moreover, $T_A|_{\pi^{-1}_A(\{a,b\})}$ and $T_B|_{\pi^{-1}_B(\{a,b\})}$ both have at least $n^{\alpha+\beta+\gamma-o(1)}$ edges (thus, these graphs may be obtained from $\gamma{\cdot}K_{\alpha,\beta}$ by deleting an $n^{-o(1)}$-fraction of edges).
\end{claim}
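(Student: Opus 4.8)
The plan is to unwind $T_A|_{\pi_A^{-1}(\{a,b\})}$ directly from its construction as a fibered product and read off the two assertions. Write $c$ for the third vertex of $A$. Because $T_A$ was built as the fibered product of the graphs $T_{A,i}$, $i \in \{a,b,c,ab,ac,bc,abc,\mr{RS}\}$, its restriction to $\pi_A^{-1}(\{a,b\})$ is the fibered product (over the same index set) of the bipartite graphs $B_i := T_{A,i}|_{\pi_{A,i}^{-1}(\{a,b\})}$, where the two sides of $B_i$ are $\pi_{A,i}^{-1}(a)$ and $\pi_{A,i}^{-1}(b)$. The first step is to read these eight graphs off the explicit definitions: $B_a$ and $B_{ac}$ are stars $K_{n^{\lambda_a},1}$ and $K_{n^{\lambda_{ac}},1}$; $B_b$ and $B_{bc}$ are stars $K_{1,n^{\lambda_b}}$ and $K_{1,n^{\lambda_{bc}}}$; $B_c$ is a single edge; $B_{ab}$ and $B_{abc}$ are perfect matchings with $n^{\lambda_{ab}}$ and $n^{\lambda_{abc}}$ edges; and $B_{\mr{RS}}$ is a single edge when $\lambda_{\mr{RS}} = 0$, and otherwise is the subgraph induced by the Ruzsa--Szemer\'edi graph $H(m)$ (with $m = n^{\frac12\lambda_{\mr{RS}}}$) on two of its three parts, which by Theorem~\ref{thm:RS} has $m$ vertices on each side, $m^{2-o(1)}$ edges, and is the union of the induced matchings cut out by the vertices of the third part.

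The second step invokes three elementary facts about fibered products of bipartite graphs (along their two-sided structure): (i) the number of edges is the product of the numbers of edges of the factors; (ii) fibering with a complete bipartite graph $K_{s,t}$ is the same as blowing up every vertex of the other factor into $s$ copies on the $a$-side and $t$ copies on the $b$-side; (iii) fibering with a perfect matching of size $s$ splits the graph into $s$ vertex-disjoint isomorphic copies, one per matching index. Applying (iii) to $B_{ab},B_{abc}$ and (ii) to $B_a,B_b,B_c,B_{ac},B_{bc}$ shows that $T_A|_{\pi_A^{-1}(\{a,b\})}$ is a disjoint union of $n^{\lambda_{ab}+\lambda_{abc}}$ vertex-disjoint copies of a single bipartite graph $B^{\ast}$, namely the blow-up of $B_{\mr{RS}}$ in which each $a$-side vertex becomes $n^{\lambda_a+\lambda_{ac}}$ copies and each $b$-side vertex becomes $n^{\lambda_b+\lambda_{bc}}$ copies. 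A short bookkeeping computation using the values of $f_a,\dots,f_{\mr{RS}}$ tabulated in Example~\ref{ex:K_3} (equivalently, the relations (\ref{eq:lambda})) then shows that the number of copies is exactly $\gamma$, that $B^{\ast}$ has exactly $\alpha/\gamma$ vertices on its $a$-side and $\beta/\gamma$ on its $b$-side, and that $|E(B^{\ast})| = n^{-o(1)}(\alpha/\gamma)(\beta/\gamma)$. Since $B^{\ast}$ is bipartite with those part sizes, it is a spanning subgraph of the complete bipartite graph on $\alpha/\gamma + \beta/\gamma$ vertices with the corresponding sides; sending the $j$-th copy of $B^{\ast}$ bijectively onto the $j$-th block of $\gamma\cdot K_{\alpha,\beta}$, $a$-side to $a$-side, defines the required bijective homomorphism $\xi_A$. (The degenerate case $p(A) = 0$, where all $\lambda_i$ vanish, is immediate.) The identical argument applied to the extreme-point decomposition of $p\uhr\wp(B)$ gives $\xi_B$; this is consistent because $\alpha,\beta,\gamma$ depend only on $p(a),p(b),p(ab)$, which $A$ and $B$ share, even though the decompositions of $p\uhr\wp(A)$ and $p\uhr\wp(B)$ differ.

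For the ``moreover'' part, fact (i) gives $|E(T_A|_{\pi_A^{-1}(\{a,b\})})| = \big(\prod_{i \ne \mr{RS}} |E(B_i)|\big)\cdot|E(B_{\mr{RS}})|$, which by the same bookkeeping equals $\alpha\beta/\gamma$ exactly when $\lambda_{\mr{RS}} = 0$ (there $|E(B_{\mr{RS}})| = 1$) and equals $n^{-o(1)}\cdot\alpha\beta/\gamma$ when $\lambda_{\mr{RS}} > 0$, the loss being the $m^{-o(1)}$ factor supplied by Theorem~\ref{thm:RS}. As $\gamma\cdot K_{\alpha,\beta}$ has $\alpha\beta/\gamma$ edges, this is exactly the statement that $T_A|_{\pi_A^{-1}(\{a,b\})}$, and symmetrically $T_B|_{\pi_B^{-1}(\{a,b\})}$, is obtained from $\gamma\cdot K_{\alpha,\beta}$ by deleting an $n^{-o(1)}$-fraction of its edges (and nothing when $\lambda_{\mr{RS}} = 0$).

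I do not expect a real obstacle: as the paper says, the claim follows from the definitions, and the only thing to watch is accuracy in the bookkeeping tying the $\lambda_i$ to $\alpha,\beta,\gamma$. The sole non-elementary ingredient is the Ruzsa--Szemer\'edi graph sitting inside $B_{\mr{RS}}$, which is precisely what prevents the bipartite graph over $\pi^{-1}(\{a,b\})$ from being an \emph{exact} union of complete bipartite blocks and is the source of the $n^{-o(1)}$ slack; the one point genuinely requiring care is the compatibility of the $A$-side and $B$-side constructions along the shared pair $\{a,b\}$, which holds because the third vertices of $A$ and of $B$ play no role in the bipartite structure restricted to $\pi^{-1}(\{a,b\})$.
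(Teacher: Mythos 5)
Your proof is correct and is surely the argument the authors had in mind: the paper itself omits the proof of Claim~\ref{claim1} (asserting only that it ``follows easily from definitions''), and the fibered-product decomposition into the eight bipartite graphs $B_i$, followed by the blow-up/disjoint-union analysis, is the natural way to carry it out. You also (rightly and necessarily) read $\gamma\cdot K_{\alpha,\beta}$ as $\gamma$ blocks of $K_{\alpha/\gamma,\beta/\gamma}$, since a vertex bijection forces the target to have $\alpha+\beta$ vertices in total.

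One warning on the bookkeeping you defer to: your parenthetical ``(equivalently, the relations (\ref{eq:lambda}))'' is not safe, because (\ref{eq:lambda}) as printed contains a typo --- the table in Example~\ref{ex:K_3} gives $f_{\mr{RS}}(\{a,b\})=f_{\mr{RS}}(\{a,b,c\})=1$, so the coefficient of $\lambda_{\mr{RS}}$ in $p(ab)$ and $p(abc)$ is $1$, not $\tfrac12$. This matters: with the printed relations one gets $p(a)+p(b)-p(ab)=\lambda_{ab}+\lambda_{abc}+\tfrac12\lambda_{\mr{RS}}$, so $\gamma$ would exceed the number of matching-induced blocks by a factor of $m=n^{\frac12\lambda_{\mr{RS}}}$, your count of $a$-side vertices of $B^\ast$ would be off by the same factor, and (since $B_{\mr{RS}}$ is a union of $m$ \emph{overlapping} induced matchings, not a disjoint union) the claimed bijective homomorphism into $\gamma$ blocks would not exist. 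Computing from the table instead gives $p(a)+p(b)-p(ab)=\lambda_{ab}+\lambda_{abc}$ and $\alpha/\gamma = n^{\lambda_a+\lambda_{ac}}\cdot m$, which is exactly what your decomposition produces; your stated conclusions are the correct ones, so just make sure the ``short bookkeeping computation'' is done from Example~\ref{ex:K_3} and not from (\ref{eq:lambda}).
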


After fixing arbitrary $\xi_A$ and $\xi_B$, the gluing procedure works as follows. We pick a uniform random automorphism $\Psi$ of $\gamma{\cdot}K_{\alpha,\beta}$ (i.e., an element of the group $(S_\alpha \times S_\beta) \ltimes S_\gamma$). The function $\xi_B^{-1} \circ \Psi \circ \xi_A$ is a bijection of sets $\pi^{-1}_A(\{a,b\})$ and $\pi^{-1}_B(\{a,b\})$. Starting from the disjoint union of $T_A$ and $T_B$, we identify pairs of vertices under this bijection. Finally, we keep edges between pairs of identified vertices if and only if edges existed between these vertices in both $T_A$ and $T_B$. (Intuitively, we randomly overlap $T_A$ and $T_B$ within the confines of $\gamma{\cdot}K_{\alpha,\beta}$ and keep only the edges which occur in both $T_A$ and $T_B$.)

Having defined randomized gluings for pairs of graphs $T_A$ and $T_B$, suffice it to say that these pairwise gluings can without difficulty be carried out simultaneously and consistently over all $A \in \MaxCliques(G)$ to obtain the graph $T$ and homomorphism $\pi : T \lra G$ (chordality of $G$ is crucial here).

\subsection{Counting Homomorphisms from $F$ and $G$}

Now that we have defined the sequence of graphs $T_n$ and homomorphisms $\pi_n : T_n \lra G$, it remains to prove inequalities (\ref{eq:G-T}) and (\ref{eq:F-T}). Both inequalities follow from the following claim.
\begin{claim}\label{claim:H}
If $H$ is a chordal graph and $\varphi \in \Hom(H,G)$, then
\[
	\log_n|\{\theta \in \Hom(H,T_n) : \pi_n \circ \theta = \varphi\}| = \sum_{S \subseteq \MaxCliques(H)} -(-1)^{|S|} p(\varphi({\ts\bigcap}S)) - o(1).
\]
\end{claim}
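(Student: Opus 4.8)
The plan is to count, for a chordal graph $H$ and a homomorphism $\varphi \in \Hom(H,G)$, the fiber $\Hom_\varphi(H,T_n) = \{\theta : \pi_n \circ \theta = \varphi\}$ by exploiting an elimination ordering of $H$ together with the explicit structure of $T_n$ over each maximal clique of $G$. Fix an elimination ordering $v_1,\dots,v_k$ of $H$. A homomorphism $\theta$ compatible with $\varphi$ is built by choosing, in order, a vertex $\theta(v_j)$ of $T_n$ lying above $\varphi(v_j)$ (i.e., in $\pi_n^{-1}(\varphi(v_j))$) that is adjacent in $T_n$ to $\theta(v_i)$ for each earlier neighbor $v_i$ of $v_j$ in $H$. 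Because $v_j$ is eliminable, its set of earlier neighbors $N_j = \{v_i : i<j,\ v_iv_j \in E_H\}$ is a clique in $H$, so $\varphi(N_j)$ is a clique in $G$, hence $\varphi(N_j \cup \{v_j\})$ is contained in some maximal clique $A_j \in \MaxCliques(G)$. The key point — which I would extract from the construction of $T_A$ and the gluing procedure — is that the number of valid choices for $\theta(v_j)$, given the already-chosen values on $N_j$, is $n^{p(\varphi(N_j \cup \{v_j\})) - p(\varphi(N_j)) - o(1)}$: the graphs $T_A$ were built precisely so that the number of vertices above $a$ is $n^{p(a)}$, the number of edges above $\{a,b\}$ is $n^{p(ab)-o(1)}$, and the number of triangles is $n^{p(abc)-o(1)}$, and the randomized gluing identifies fibers over shared vertices/edges without destroying these counts (up to $n^{o(1)}$). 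Multiplying over $j$ and telescoping gives
\[
	\log_n \hom_\varphi(H,T_n) = \sum_{j=1}^{k} \Big( p\big(\varphi(N_j \cup \{v_j\})\big) - p\big(\varphi(N_j)\big) \Big) - o(1),
\]
and by Lemma~\ref{la:entropy-of-chordal-MRF} (applied to $G$-polymatroidal $p$, using the elimination ordering $\varphi(v_1),\dots$ of the relevant chordal subgraph, or more directly the telescoping identity it encodes) the right-hand sum equals $\sum_{S \subseteq \MaxCliques(H)} -(-1)^{|S|} p(\varphi(\bigcap S))$, which is the claimed expression.

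To make the per-step count rigorous, I would argue the lower and upper bounds on the number of choices for $\theta(v_j)$ separately. For a single new vertex whose earlier-neighbor clique has image of size $0$, $1$, or $2$ in $G$: size $0$ means $\theta(v_j)$ ranges freely over $\pi_n^{-1}(\varphi(v_j))$, of size $n^{p(\varphi(v_j))}$; size $1$, say the neighbor maps to $a$ and $v_j$ to $b$ with $\{a,b\} \subseteq A$, means $\theta(v_j)$ ranges over the $T_n$-neighbors of a fixed vertex above $a$ among vertices above $b$, and the local structure of $T_A$ (and the fact that gluing over the vertex $a$ only permutes fibers) gives $n^{p(ab)-p(a) \pm o(1)}$ such neighbors, uniformly over the choice of the fixed vertex up to $n^{o(1)}$ (here the fibered-product construction with the Ruzsa–Szemerédi factor is what forces the $o(1)$); size $2$, say neighbors map to $a,b$ and $v_j$ to $c$ with $\{a,b,c\}=A$ a triangle of $G$, means $\theta(v_j)$ completes a fixed edge above $\{a,b\}$ to a triangle in $T_A$, and by construction essentially every such edge lies in $n^{p(abc)-p(ab)\pm o(1)}$ triangles — again uniformly up to $n^{o(1)}$, which is exactly the content built into $T_{A,\mathrm{RS}}$ via property (iii) of Theorem~\ref{thm:RS}. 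Summing the $o(1)$ errors over the $k$ steps keeps the total error $o(1)$ since $k$ is a constant.

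The main obstacle I anticipate is the uniformity of these per-step counts — i.e., showing that the number of ways to extend $\theta$ to $v_j$ is $n^{p(\varphi(N_j^+)) - p(\varphi(N_j)) - o(1)}$ \emph{regardless} of which specific vertices/edges were chosen for $N_j$ in earlier steps, and that this survives the randomized gluing. For the ``free'' and ``clique-of-size-$1$'' cases this is essentially immediate from the construction, but the ``extend an edge to a triangle'' case genuinely relies on the Ruzsa–Szemerédi property that every edge of $H(m)$ lies in exactly one triangle (combined across the fibered-product factors) together with the claim, already asserted in the paper, that $T_A|_{\pi_A^{-1}(\{a,b\})}$ is $\gamma\cdot K_{\alpha,\beta}$ minus an $n^{-o(1)}$-fraction of edges (Claim~\ref{claim1}); one must check that discarding those few edges during gluing, and intersecting edge sets of the two overlapped copies, changes the triangle-extension count for any surviving edge by at most a factor $n^{o(1)}$. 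I would handle this by a counting argument: the total number of (edge above $\{a,b\}$, triangle above $A$) incidences is $n^{p(abc)-o(1)}$, the number of edges is $n^{p(ab)-o(1)}$, and an averaging/concentration argument over the random gluing shows all but an $n^{-o(1)}$-fraction of edges have the expected number $n^{p(abc)-p(ab)\pm o(1)}$ of completions; since a homomorphism $\theta$ only ever needs finitely many such extensions, we may discard the bad edges with negligible effect on the logarithmic count. Once this uniformity is in hand, inequalities (\ref{eq:G-T}) and (\ref{eq:F-T}) follow by applying the claim with $H=G$, $\varphi=\mathrm{id}$ (giving the lower bound $1$, via Lemma~\ref{la:entropy-of-chordal-MRF} applied to $p \in \mc P(G)$ with $p(V_G)=1$) and with $H=F$ summed over all $\varphi \in \Hom(F,G)$ (giving the stated max, since there are only a constant number of homomorphisms and $\log_n$ of a sum of finitely many terms is the max of their $\log_n$'s up to $o(1)$).
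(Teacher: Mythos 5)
Your skeleton (build $\theta$ along an elimination ordering of $H$, telescope the per-step counts, convert via Lemma~\ref{la:entropy-of-chordal-MRF}) is the same as the paper's, but you apply it directly to $T_n$, and that is where a genuine gap opens. The paper does \emph{not} attempt per-step counting in $T_n$ itself. Instead it introduces a supergraph $T_n^\ast$ in which every Ruzsa--Szemer\'edi factor $T_{A,\mr{RS}}$ is replaced by the complete tripartite graph on the same parts. In $T_n^\ast$ the extension counts are \emph{exactly} uniform (every $a'$ over $a$ has exactly $n^{p(ab)-p(a)}$ neighbours over $b$; every edge over $(a,b)$ lies in exactly $n^{p(abc)-p(ab)+\frac12\lambda_{\mr{RS}}(abc)}$ triangles), so the telescoping argument gives an exact count of $\Hom_\varphi(H,T_n^\ast)$, with a surplus term $\sum_A \frac12\lambda_{\mr{RS}}(A)\cdot|\{A'\in\MaxCliques(H):\varphi(A')=A\}|$. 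The claim is then recovered by computing the \emph{probability} that a uniform random $\theta\in\Hom(H,T_n^\ast)$ lands in $\Hom(H,T_n)$, which factors as a product over the triangles of $H$ because (using chordality of $H$) the survival events of the triangles containing a fixed edge $(x,y)$ are conditionally independent given $\theta(x),\theta(y)$; each triangle survives with probability $n^{-\lambda_{\mr{RS}}(\cdot)-o(1)}$, exactly cancelling the surplus.

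Your substitute for this device --- ``an averaging/concentration argument over the random gluing shows all but an $n^{-o(1)}$-fraction of edges have the expected number of completions, and we may discard the bad edges'' --- is asserted, not proved, and it is precisely the hard point. Two concrete problems: (i) the survival of the candidate completions $c'$ of a fixed edge $(a',b')$ is governed by the \emph{same} few random overlap permutations, so the completion counts of distinct edges (and of distinct $c'$ for one edge) are heavily correlated, and no Chernoff-type bound applies off the shelf; when $p(abc)-p(ab)=0$ an edge has a single candidate completion, which survives or not, so ``most edges have the expected count'' is simply false pointwise and must be replaced by a global incidence count. (ii) Even granting that most edges are good at each level, the telescoped \emph{lower} bound needs a hereditary statement: the good extensions at step $j$ must themselves be extendable at steps $j+1,\dots,k$, i.e., you need a ``good subtree'' of partial homomorphisms, and discarding bad edges level by level does not automatically produce one. (The same issue already affects your ``clique-of-size-$1$'' case when the edge $\{a,b\}$ of $G$ lies in two maximal triangles, since the gluing there deletes edges too.) These obstacles are exactly what the $T^\ast$/survival-probability decomposition is designed to avoid, so as written your proposal is missing the key idea rather than offering a complete alternative to it.
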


Before proving Claim~\ref{claim:H}, let's see how it implies inequalities (\ref{eq:G-T}) and (\ref{eq:F-T}). To prove (\ref{eq:G-T}), we take $H=G$ and $\varphi = \mr{id}_{V_G}$ (the identity map on $V_G$ viewed as a homomorphism $G \lra G$) in Claim~\ref{claim:H} and see that
\begin{align*}
\log_n\hom(G,T_n) &\ge \log_n|\{\theta \in \Hom(G,T_n) : \pi_n \circ \theta = \mr{id}_{V_G}\}|\\
&= \sum_{S \subseteq \MaxCliques(G)} -(-1)^{|S|} p({\ts\bigcap}S) - o(1) 
= 1 - o(1)\quad\text{(by Lemma~\ref{la:entropy-of-chordal-MRF})}.
\end{align*}
Inequality (\ref{eq:F-T}) is immediate from Claim~\ref{claim:H} taking $H=F$:
\begin{align*}
\lim_{n\to\infty}\log_n\hom(F,T_n) &= \lim_{n\to\infty}\max_{\varphi \in \Hom(F,G)} \log_n|\{\theta \in \Hom(F,T_n) : \pi_n \circ \theta = \varphi\}| \quad\text{(as $\hom(F,T_n) \xrightarrow{n \to \infty} \infty$)}\\
&= \sum_{S \subseteq \MaxCliques(H)} -(-1)^{|S|} p(\varphi({\ts\bigcap}S)).
\end{align*}

Now for the proof of this claim:
\begin{proof}[Proof of Claim~\ref{claim:H}]
We define a supergraph $T^\ast$ of $T$ as follows. For each $A \in \MaxCliques(G)$, we define a supergraph $T_A^\ast$ of $T_A$ and apply the same gluing procedure. If $|A| \le 2$, let $T_A^\ast = T_A$. If $|A| = 3$ (say $A = \{a,b,c\}$), recall that $T_A$ is the fibred product of graphs $T_{A,a},\dots,T_{A,abc}$ and $T_{A,\mr{RS}}$; let $T_A^\ast$ be the fibred product of graphs $T_{A,a},\dots,T_{A,abc}$ and $T_{A,\mr{RS}}^\ast$ where $T_{A,\mr{RS}}^\ast$ is the complete tripartite graph with all parts of size $n^{\frac{1}{2}\lambda_{\mr{RS}}(A)}$. Viewing $T_{A,\mr{RS}}$ as a subgraph of $T_{A,\mr{RS}}^\ast$ (with the same vertex set) and apply the same gluing procedure (i.e., with the same randomization), we view $T$ as a subgraph of $T^\ast$ (with the same vertex set). It now suffices to prove the following:
\begin{align}
&\log_n|\{\theta \in \Hom(H,T^\ast_n) : \pi_n \circ \theta = \varphi\}| =\label{eq:blep1}\\
&\hspace{1.32in}\sum_{S \subseteq \MaxCliques(H)} -(-1)^{|S|} p(\varphi({\ts\bigcap}S))\notag\\
&\hspace{1in}+\sum_{\substack{A \in \MaxCliques(G)\,:\,|A|=3\\ \phantom{\ }}} {\textstyle\frac{1}{2}}\lambda_{\mr{RS}}(A) \cdot |\{A' \in \MaxCliques(H) : \varphi(A') = A\}|,\notag\\
&\log_n\textstyle\Pr_{\theta \in \Hom(H,T^\ast_n)}[\theta \in \Hom(H,T_n)] =\label{eq:blep2}\\
&\hspace{1in}-\sum_{A \in \MaxCliques(G)\,:\,|A|=3} {\textstyle\frac{1}{2}}\lambda_{\mr{RS}}(A) \cdot |\{A' \in \MaxCliques(H) : \varphi(A') = A\}| - o(1).\notag
\end{align}
We first give the argument for equation (\ref{eq:blep1}). Note the following:
\begin{itemize}\setlength{\itemsep}{0pt}
\item
for every edge $(a,b)$ in $G$ and every $a' \in \pi_n^{-1}(a)$,
\[
|\{b' \in \pi_n^{-1}(b) : (a',b')\text{ is an edge in }T_n^\ast\}| = n^{p(ab)-p(a)},
\]
\item
for every triangle $(a,b,c)$ in $G$ and every $a' \in \pi_n^{-1}(a)$ and $b' \in \pi_n^{-1}(b)$ such that $(a',b')$ is an edge in $T_n^\ast$,
\[
|\{c' \in \pi_n^{-1}(c) : (a',b',c')\text{ is a triangle in }T_n^\ast\}| = n^{p(abc) - p(ab) + \frac{1}{2} \lambda_{\mr{RS}}(abc)}.
\]
\end{itemize}
It follows that if $v_1,\dots,v_n$ is an elimination ordering for $H$ then
\begin{align*}
&\log_n|\{\theta \in \Hom(H,T^\ast_n) : \pi_n \circ \theta = \varphi\}| =\\
&\hspace{.15in}\sum_{i=1}^n p\big(\varphi(\{\textup{neighbors of $v_i$ among }v_1,\dots,v_{i-1}\}
    \cup \{v_i\})\big) - p\big(
    \varphi(\{\textup{neighbors of $v_i$ among }v_1,\dots,v_{i-1}\})\big)\\
&\hspace{.15in}+\sum_{\substack{A \in \MaxCliques(G)\,:\,|A|=3\\ \phantom{\ }}} {\textstyle\frac{1}{2}}\lambda_{\mr{RS}}(A) \cdot |\{A' \in \MaxCliques(H) : \varphi(A') = A\}|.
\end{align*}
Equation (\ref{eq:blep1}) now follows using Lemma~\ref{la:entropy-of-chordal-MRF}.

For equation (\ref{eq:blep2}), notice that a triangle $(a',b',c')$ over $(a,b,c)$ in $T_n^\ast$ is a triangle in $T_n$ with probability $n^{-\lambda_{\mr{RS}}(abc)-o(1)}$. Now consider a uniform random homomorphism $\theta \in \Hom(H,T_n^\ast)$. For an edge $(x,y)$ in $H$, consider the vertices $z_1,\dots,z_m$ such that $(x,y,z_j)$ are triangles in $H$. The key observation (using chordality of $H$) is that events $\{(\theta(x),\theta(y),\theta(z_j))$ is a triangle in $T_n\}_{j=1,\dots,m}$ are independent conditioned on $\theta(x)$ and $\theta(y)$. By expanding the probability that $\theta \in \Hom(H,T_n)$ conditionally along an elimination ordering, we see that $\theta \notin \Hom(H,T_n)$ with probability $\prod_{\text{triangles $(x,y,z)$ in }H}n^{-\lambda_{\mr{RS}}(\theta(x)\theta(y)\theta(z))-o(1)}$, which proves (\ref{eq:blep2}) and completes the proof of Claim~\ref{claim:H}.
\end{proof}

\subsection{Series-Parallel $G$}\label{sub:series-parallel}

Finally, we prove the theorem for the case when $G$ is series-parallel (but not necessarily chordal). Recall that for every series-parallel graph $G$, there exists a $2$-tree $\wt G$ (i.e., a $K_4$-free chordal graph)  such that $V_G = V_{\wt G}$ and $E_G \subseteq E_{\wt G}$. Fix any such $\wt G$.

Consider any $p \in \mc P(G)$. Note that $\mc P(G) \subseteq \mc P(\wt G)$ (i.e., any normalized $G$-polymatroidal function is also normalized $\wt G$-polymatroidal). Therefore, we can construct graphs $\wt T_n$ with homomorphisms $\pi_n : \wt T_n \lra \wt G$ such that (by Claim~\ref{claim:H} applied to $\wt G$ and $\wt T_n$) for every chordal graph $H$ and $\varphi \in \Hom(H,\wt G)$,
\begin{equation}\label{eq:wt-T}
	\log_n|\{\theta \in \Hom(H,\wt T_n) : \pi_n \circ \theta = \varphi\}| = \sum_{S \subseteq \MaxCliques(H)} -(-1)^{|S|} p(\varphi({\ts\bigcap}S)) - o(1).
\end{equation}

Let $T_n$ be the subgraph of $\wt T_n$ which has the same vertices, but where we keep an edge $(v,w)$ from $\wt T_n$ if and only if $(\pi_n(v),\pi_n(w))$ is an edge of $G$. Note that $\pi_n$ is a homomorphism in $\Hom(T_n,G)$. By (\ref{eq:wt-T}), Claim~\ref{claim:H} now holds (exactly as stated) for $G$ and $T_n$. The proof of inequalities  (\ref{eq:G-T}) and (\ref{eq:F-T}) then follows by the exact same argument.

\section{Proof of Theorem~\ref{thm:P_4} (HDE of $P_4$ and $P_{4n+2}$)}\label{sec:P_4}

In this section we give the proof of Theorem~\ref{thm:P_4} (the equation $\hde(P_4,P_{4n+2}) = (4n+1)/(4n^2+3n+1)$), which was discovered by solving the linear program of Theorem~\ref{thm:tight} for small values of $n$. We include this proof as an illustration of a somewhat exotic phenomenon arising in the study of a simple HDE problem.

Let $P_{4n+2} = (V,E)$ where $V = \{0,1,\dots,4n+1\}$ and $E =
\big\{\{0,1\},\{1,2\},\dots,\{4n,4n+1\}\big\}$. Define function $f : V \lra
\N$ as follows:
\begin{itemize}
  \item
    $f(0) = f(4n+1) = 2n+1$,
  \item
    $f(4k+1)=f(4k+3)=2k+1$ for $k \in \{0,\dots,n-1\}$,
  \item
    $f(4k+2)=f(4k+4)=2n-2k-1$ for $k \in \{0,\dots,n-1\}$.
\end{itemize}
For every $N \in \N$, we define a random graph $T_N = (V_N,E_N)$ as
follows. Let
\[
    V_N = \big\{(v,i) : v \in V,\ i \in \{1,\dots,\lceil N^{f(v)}\rceil\}\big\}.
\]
Independently for all $(v,i),(w,j) \in V_N$, place an edge with probability
\begin{align*}
        \Pr\big[\{(v,i),&(w,j)\} \in E_N\big]\\
        &=
        \begin{cases}
          \frac{1}{N} &\text{if }\{v,w\} = \{4k,4k+1\}\text{ where }k \in \{0,\dots,n-1\},\\
          1
          &\text{if }\{v,w\} = \{4k+r,4k+r+1\} \text{
          where $k \in \{0,\dots,n-1\}$ and $r \in \{1,2,3\}$},\\
          0  &\text{otherwise.}
        \end{cases}
\end{align*}
\begin{figure}[h!]
    \centering
    \includegraphics{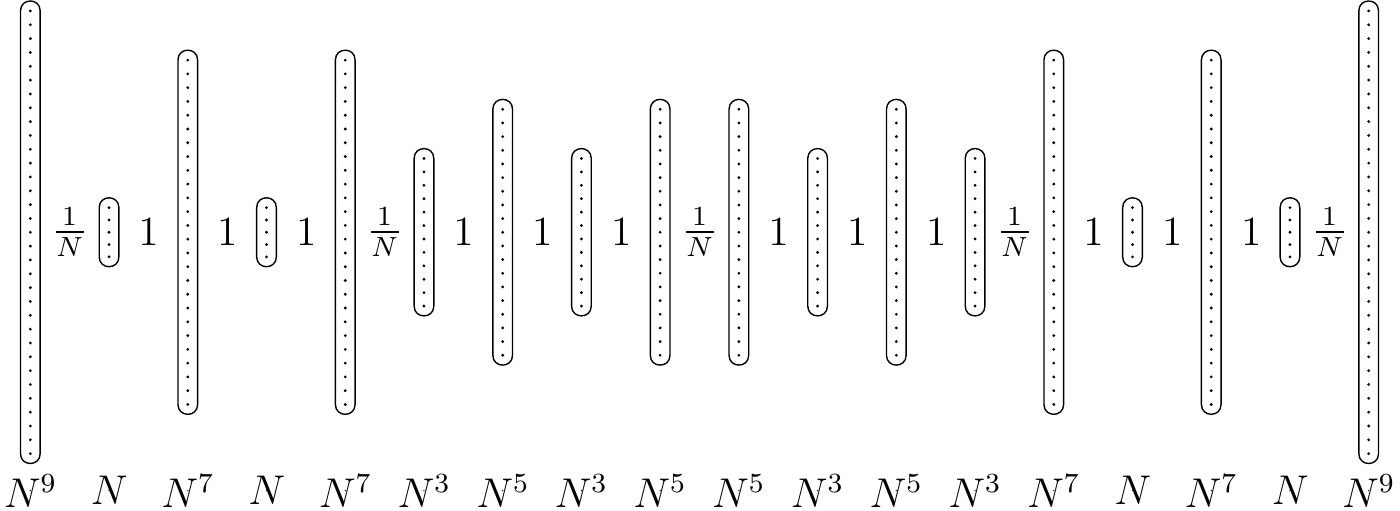}
    \caption{The random graph $T_N$ when $n=4$ (drawn to logscale height).
    The value ($1$ or $\frac 1 N$) in-between partitions of the vertex set indicates the
    probability of an edge.}
\end{figure}

It holds with high probability that
\begin{align*}
    \hom(P_{4n+2},T_N)
    &\ge N^{4n^2+3n+1-o(1)}.
\end{align*}
It also holds with high probability (by inspection of the various homomorphisms
from $P_4$ to $P_{4n+2}$) that
\begin{align*}
    \hom(P_4,T_N) &\le N^{4n+1+o(1)}. 
\end{align*}Therefore,
\[
    \hde(P_4,P_{4n+2}) \le \frac{4n+1}{4n^2 + 3n + 1}.
\]

We now prove the opposite inequality. We will represent
homomorphisms $P_4 \lra P_{4n+2}$ by 4-tuples $\sq{i_1,i_2,i_3,i_4}
\in V^4$. Define a function $w : \Hom(P_4,P_{4n+2}) \lra \N$ as follows:
\begin{align*}
w(\sq{4k,4k+1,4k,4k+1}) &= 1
&&\text{for }k \in \{0,\dots,n\},\\
w(\sq{4k,4k+1,4k+2,4k+1}) &= 1
&&\text{for }k \in \{0,\dots,n-1\},\\
w(\sq{4(n-k)+1,4(n-k),4(n-k)-1,4(n-k)}) &= 1
&&\text{for }k \in \{0,\dots,n-1\},\\
w(\sq{4k+2,4k+3,4k+4,4k+5}) &= 4k+2
&&\text{for }k \in \{0,\dots,n-1\},\\
w(\sq{4(n-k)+1,4(n-k),4(n-k)-1,4(n-k)-2}) &= 4k+2
&&\text{for }k \in \{0,\dots,n-1\},
\end{align*}
and let $w(\varphi) = 0$ for all other homomorphisms $\varphi \in \Hom(P_4,P_{4n+2})$.
Note that $$\sum_{\varphi \in \Hom(P_4,P_{4n+2})} w(\varphi) =
4n^2+3n+1.$$

Fix any target graph $T$ with at least one undirected edge. Let $X
\in \ProbDist((V_T)^{V_G})$ be the uniform distribution on
$\Hom(G,T)$. Let $\Phi$ be a random homomorphism in $\Hom(F,G)$
drawn according to
\[
    \Pr\big[\Phi = \varphi\big] = \frac{w(\varphi)}{4n^2+3n+1}.
\]
Let $Y^\Phi \in \ProbDist((V_T)^{V_F})$ denote the pullback of $X$
along $\Phi$ (so in particular $\Supp(Y^\Phi) \subseteq \Hom(F,T)$).

\begin{figure}[h!]
    \centering
    \includegraphics{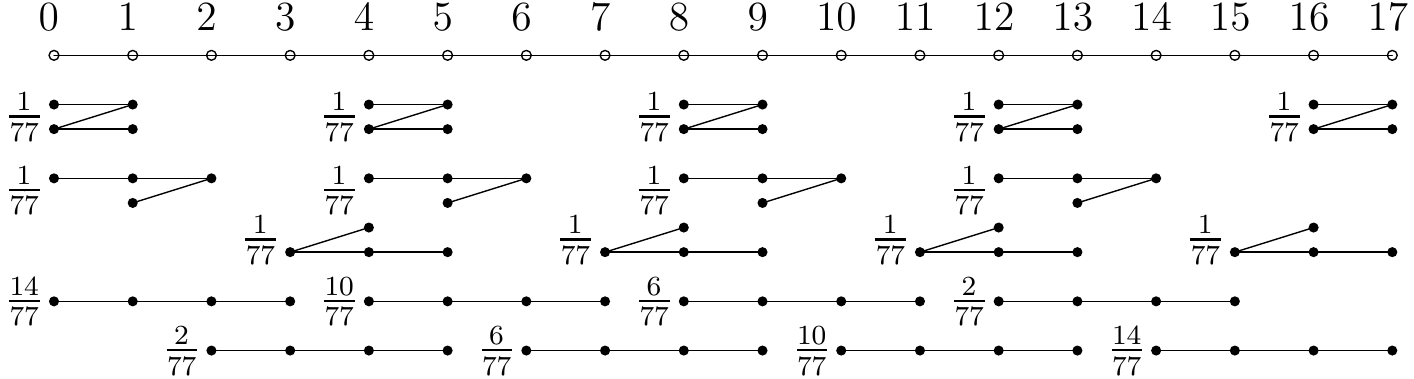}
    \caption{The distribution $\Phi$ of homomorphisms $P_4 \lra P_{4n+2}$ when $n=4$.}
\end{figure}

By a straightforward calculation using equation (\ref{eq:entropy-of-chordal-MRF}), we have
\begin{align}\label{qqq1}
  (4n^2+3n&+1)\H Y^\Phi\\
  &=
   \Big(\H X_{\{0,1\}} - \H X_0\Big) + \Big(\H X_{\{n,n+1\}} - \H X_{4n+1}\Big)
   +\sum_{k=0}^n (4n+1)\H X_{\{4k,4k+1\}}\notag\\
   &\quad+\sum_{k=0}^{n-1}\left(
        \begin{aligned}
            &&(4n-4k)&\H X_{\{4k+1,4k+2\}}\\
            &+&4n &\H X_{\{4k+2,4k+3\}}\\
            &+&(4k+4)&\H X_{\{4k+3,4k+4\}}
        \end{aligned}
        \right)-\left(
        \begin{aligned}
            &&(4n-4k)&\H X_{4k+1}\\
            &+& (4n-4k-1)&\H X_{4k+2}\\
            &+& (4k+3)&\H X_{4k+3}\\
            &+& (4k+4)&\H X_{4k+4}
        \end{aligned}\right).\notag
\end{align}
By monotonicity and submodularity of the entropy operator (also
using the fact that $\H X_\emptyset = 0$), we have
\begin{align}
\label{qqq2}
    0 &\ge
    \left\{
    \begin{aligned}[]
    &\H X_0 - \H X_{\{0,1\}},\\
    &\H X_{4n+1} - \H X_{\{4n,4n+1\}},\\
    &\ts\sum_{k=0}^{n-1} (4k+1)\Big(\H X_{\{4k+1,4k+2\}} - \H X_{4k+1} - \H X_{4k+2}\Big),\\
    &\ts\sum_{k=0}^{n-1} \H X_{\{4k+2,4k+3\}} - \H X_{4k+2} - \H X_{4k+3},\\
    &\ts\sum_{k=0}^{n-1} (4n-4k-3)\Big(\H X_{\{4k+3,4k+4\}} - \H X_{4k+3} -
    \H X_{4k+4}\Big).
    \end{aligned}\right.
\end{align}
Adding each negative quantity in the lefthand side of equation
(\ref{qqq2}) to the righthand side of equation (\ref{qqq1}), we get
\begin{align*}
    (4n^2+3n+1)\H Y^\Phi &\ge
    (4n+1)\left(\sum_{\{v,w\} \in E} \H X_{\{v,w\}} - \sum_{v \in \{1,\dots,4n\}} \H X_v\right)\\
    &= (4n+1)\H X \quad\text{by (\ref{eq:entropy-of-chordal-MRF}).}
\end{align*}
It follows that $\ds\hde(P_4,P_{4n+2}) \ge \frac{4n^2+3n+1}{4n+1}$, as required.

\section{Conclusion}\label{sec:conclusion}

The main open question is whether $\hde(F,G)$ is computable. (This question is equivalent to decidability of the homomorphism domination problem
by virtue of Lemma~\ref{la:homdom-facts}(d).) Theorem~\ref{thm:tight} shows that $\hde(F,G)$ is computable in the special case that $F$ is chordal and $G$ is series-parallel. Examples like $\hde(\Vee,\vec C_3)$
show that the homomorphism domination exponent can be tricky to compute even for very small instances.
Our work also raises the problem of finding a closed-form expression for $\hde(P_m,P_n)$. So far, we only have closed expressions when $m$ is odd or equal to $2$ or $4$. Besides the applications in database theory, we hope that the homomorphism domination exponent will be seen as interesting parameter in its own right.

\subsection*{Acknowledgements}
We thank Madhu Sudan, Noga Alon and Ehud Friedgut for insightful discussions. We also thank an anonymous referee for helpful comments.

\bibliography{homdom}{}
\bibliographystyle{alpha}

\end{document}